\newcommand{\R}{\mathbb{R}}
\newcommand{\N}{\mathbb{N}}
\newcommand{\setC}{\mathcal{C}}
\newcommand{\setD}{\mathcal{D}}
\newcommand{\setU}{\mathcal{U}}
\newcommand{\argmin}{\mathrm{argmin}}
\newcommand{\dom}{\mathrm{dom}}
\newcommand{\dist}{\mathrm{dist}}
\newcommand{\ri}{\mathrm{ri}}
\newcommand{\prox}{\mathrm{prox}}
\newcommand{\AssumpA}{\textbf{\hyperref[assump:A]{Assumption A}}}
\newcommand{\AssumpB}{\textbf{\hyperref[assump:B]{Assumption B}}}
\newcommand{\Hone}{\textbf{(H1)}}
\newcommand{\Htwo}{\textbf{(H2)}}
\newcommand{\Hthree}{\textbf{(H3)}}
\theoremstyle{plain}
\newtheorem{theorem}{Theorem}[section]
\newtheorem{proposition}[theorem]{Proposition}
\newtheorem{lemma}[theorem]{Lemma}
\newtheorem{corollary}[theorem]{Corollary}
\theoremstyle{remark}
\newtheorem{example}{Example}
\newtheorem{remark}{Remark}
\theoremstyle{definition}
\newtheorem{definition}{Definition}
\title{On the Convergence Analysis of DCA}
\author{Yi-Shuai Niu\\
Beijing Institute of Mathematical Sciences and Applications (BIMSA), Beijing, China\\
\texttt{niuyishuai@bimsa.cn}}
\date{}
\begin{document}

\maketitle

\begin{abstract}
Difference-of-Convex (DC) programming, which seeks to minimize a function expressed as the difference of two convex functions, arises in a wide range of applications in machine learning, signal processing, and operations research. A classical and widely used algorithm for solving DC programs is the Difference-of-Convex Algorithm (DCA). In this paper, we revisit DCA from a distinctly DC-specific perspective. We first separate well-definedness from asymptotic convergence and introduce an additional assumption ensuring the solvability of the DCA subproblems, which clarifies why the choice of DC decomposition matters. We then develop a Lyapunov--descent--regularity framework in which the descent estimate is read directly from the convex subproblems and the regularity estimate is verified from DCA optimality conditions. This yields global convergence of the iterates $\{x^k\}$ for both standard and convex-constrained DC programs under either the classical {\L}ojasiewicz subgradient inequality or the broader Kurdyka--{\L}ojasiewicz (KL) property. We further explain how stronger regularity regimes, such as the Polyak--{\L}ojasiewicz (PL) condition, fit into the same framework and sharpen the resulting convergence rates. Consequently, we obtain finite-time, linear, and sublinear rates for objective values and iterates in a way that cleanly separates well-definedness, DCA-specific structure, and KL/PL regularity, and that is readily transferable to DCA-type variants.
\end{abstract}

\noindent\textbf{MSC2020 subject classifications.} 90C26, 90C30, 65K05.

\medskip
\noindent\textbf{Keywords:} Convergence analysis; DC program; DCA; Kurdyka--{\L}ojasiewicz property.

\tableofcontents

\bigskip
\section{Introduction}
\label{intro}
Consider the Convex Constrained Difference-of-Convex (DC) Program defined by 
\begin{equation*}\label{prob:P}
	\min \{ g(x)-h(x): x\in \setC \}\tag{P}
\end{equation*}
under the classical assumptions: \\
\noindent
\fbox{
	\parbox{\dimexpr\textwidth-2\fboxsep-2\fboxrule\relax}{
		\noindent (\AssumpA)\label{assump:A}
		\begin{itemize}
			\item Both $g$ and $h$ belong to $\Gamma_0(\R^n)$, which denotes the set of proper, closed (or lower semi-continuous), and convex functions from $\R^n$ to $\R\cup\{+\infty\}$;
			\item $\setC$ is a nonempty closed and convex set of $\R^n$ (could be identical to $\R^n$).
			\item The solution set of \eqref{prob:P} is non-empty, which implies that $\emptyset \neq (\dom g\cap \setC)\subset \dom h$, where $\dom g:=\{x\in \R^n: g(x)<\infty\}$ is the effective domain of $g$.
		\end{itemize}
	}
}
We adopt the convention $\infty - \infty = \infty$. The standard DC program $\min\{g(x)-h(x):x\in \R^n\}$ can be considered as a special case of \eqref{prob:P} by setting $\setC = \R^n$. Note that
any problem \eqref{prob:P} is equivalent to a standard DC program:
\begin{equation*}\label{prob:P'}
	\min \{ (g+\chi_{\setC})(x)-h(x): x\in \R^n \},\tag{P'}
\end{equation*}
where $\chi_{\setC}$ is the indicator function of $\setC$ defined by $\chi_{\setC}(x)=0$ if $x\in \setC$ and $\infty$ otherwise. Clearly, both $g+\chi_{\setC}$ and $h$ belong to $\Gamma_0(\R^n)$.

DC program is an active field in nonconvex and nonsmooth optimization \cite{Paper_Hartman1959,tao1986algorithms,hiriart1985generalized,Pham1997,Horst1999,DCA30,de2020abc}. The most renowned algorithm for DC program is DCA (described in Algorithm \ref{alg:DCA}), introduced by Pham Dinh Tao in $1985$ as an extension of the subgradient method \cite{tao1986algorithms}, and extensively developed by Le Thi Hoai An and Pham Dinh Tao since $1994$ (see e.g., \cite{Pham1997,Pham1998,Pham2005,le2018convergence,DCA30}). 
\begin{algorithm}
	\caption{DCA for problem \eqref{prob:P}}
	\label{alg:DCA}
	\begin{algorithmic}[1]
		\Require Initial point $x^0\in \dom \partial h$; 
		\For{$k=0,1,\ldots$}
		\State{Compute $y^k\in \partial h(x^k)$;}
		\State{Solve the convex subproblem:
			\begin{equation*}\label{prob:Pk}
				x^{k+1}\in \argmin \{ g(x) - \langle y^k,x \rangle : x\in \setC \}, \tag{P$_k$}
			\end{equation*}
		}
		\EndFor
	\end{algorithmic}
\end{algorithm}

The basic idea of DCA for problem \eqref{prob:P} is to replace the concave part $-h$ by its affine upper model at the current iterate $x^k$, thereby obtaining a convex majorant of the objective $f$ over $\setC$; minimizing this majorant gives the next iterate $x^{k+1}$. This idea coincides with the Majorization-Minimization (MM) algorithm proposed by Hunter and Lange in \cite{hunter2000optimization}, but DCA is distinguished by the fact that the surrogate is generated from a specific DC decomposition.

DCA has proven to be a promising approach in many large-scale real-world applications, such as sparsity learning \cite{Jackl12}, clustering \cite{Lethi2007new}, molecular conformation \cite{Lethi2003large}, portfolio optimization \cite{pham2011efficient,pham2016dc,niu2019higher}, bilinear matrix inequality \cite{niu2014dc}, natural language processing \cite{niu2021difference}, image denoising \cite{RInDCA}, trust region subproblem \cite{Pham1998}, mixed-integer optimization \cite{le2001continuous,niu2008dc,niu2010programmation,niu2019parallel} and eigenvalue complementarity problems \cite{le2012dc,niu2013efficient,niu2015solving,niu2019improved}, to name a few. There are several variants of DCA including the proximal DCA and the linearized proximal DCA \cite{souza2016global,pang2017computing,wen2018proximal,lu2019enhanced}, the Boosted DCA  \cite{BDCA_S,niu2019higher}, the inertial DCA \cite{de2019inertial,RInDCA}, the ADCA \cite{nhat2018accelerated} and the Stochastic DCA \cite{lethi2022stochastic}. These methods incorporate several optimization techniques (e.g., regularization via proximal term, prox-linearization, backtracking line search, Nesterov's extrapolation, and stochastic approximation) to enhance the overall performance of DCA for structured DC optimization problems. Note that some classical convex/nonconvex optimization algorithms can be viewed as DCA with a special DC decomposition \cite{DCA30}, such as the Goldstein-Levitin-Polyak projection algorithm, the proximal point algorithm, the Expectation-Maximization algorithm, the concave-convex procedure, the iterative shrinkage-thresholding algorithm, and the forward-backward splitting algorithm. See \cite{DCA30} and the references therein for a comprehensive introduction about DC program, DCA and applications. 

Another useful viewpoint is the link between DCA and proximal subgradient methods. After adding and subtracting a quadratic regularizer, one can rewrite the DCA subproblem as a proximal step for a regularized convex part driven by a subgradient of the regularized concave part. This does not identify the two schemes completely, because the DC decomposition and the choice of subgradients remain essential in DCA, but it explains why error-bound and PL-type ideas from the proximal subgradient literature can also be informative here; see, e.g., \cite{zhu2023subgradient}.

\paragraph*{Motivations}

DCA may not be well-defined if an inappropriate DC decomposition is used. \AssumpA\ alone does not guarantee the well-definedness of DCA; see Example \ref{eg:01}. This issue is genuinely DC-specific: the solvability of the convex subproblems depends on the decomposition, the admissibility of the next iterate depends on $\dom \partial h$, and for nonsmooth $h$ the choice of $y^k\in\partial h(x^k)$ can affect the asymptotic behavior. As far as we know, these prerequisites are not isolated systematically in the existing literature, although they should be checked before any KL-type argument is invoked.

A second motivation is to clarify what is specific to DCA, as opposed to what belongs to the abstract KL convergence machinery of descent methods \cite{attouch2013convergence}. In DCA, the descent estimate comes from convex subproblem optimality, the residual estimate depends on the decomposition and on the subgradient selection, and DC criticality is weaker than the usual stationarity notions. Our goal is therefore not merely to restate a standard KL template, but to show how a DCA-native proof route can be organized once well-definedness has been separated from convergence.

\paragraph*{Contributions} 
\begin{itemize}
	\item In \S\ref{sec:welldefofDCA}, we identify algorithmic conditions ensuring that DCA is well-defined. In particular, \AssumpB\ separates subproblem solvability from asymptotic convergence arguments and makes explicit the role played by the DC decomposition. The counterexample in Example \ref{eg:01} and the alternative decomposition in Remark \ref{rmk:1} show that this separation is essential rather than cosmetic.
	
	\item In \S\ref{sec:classicalcvofdca}, we intentionally keep the discussion preparatory: it consolidates the classical monotonicity, descent, square-summability, and subsequential criticality properties into a compact toolbox whose formulas are reused later. This shortens subsequent proofs for the unified framework and makes clear which ingredients are classical and which are new.
	
	\item In \S\ref{sec:cvofdcawithL}, we develop a DCA-specific Lyapunov--descent--regularity framework based on \Hone--\Hthree. It clarifies the relation with classical KL-based frameworks, shows how the abstract regularity condition can be verified directly from DCA optimality conditions, and yields global convergence for both standard and convex-constrained DC programs under the {\L}ojasiewicz subgradient inequality or the KL property. We also add a PL specialization and explain how sharper, structure-dependent rates follow as soon as an explicit KL exponent or a PL/error-bound constant is available. These features make the framework directly portable to proximal, linearized, inertial, and other DCA-type variants.
\end{itemize}

Our main contribution, therefore, is a DC-specific convergence framework that separates well-definedness, decomposition effects, and KL/PL regularity while remaining compatible with the broader KL theory for descent methods.

\section{Well-definedness of DCA}\label{sec:welldefofDCA}

The well-definedness of DCA is a crucial question and highly dependent on the intrinsic nature of the DC decomposition. It is important to note that \AssumpA\ alone \textbf{is not sufficient to guarantee the well-definedness of DCA, and DC criticality ($\partial g(x^\star)\cap \partial h(x^\star)\neq \emptyset$) is not a necessary optimality condition for a local minimizer $x^\star$.} To illustrate this, let us consider the following counterexample:

\begin{example}\label{eg:01}
	Consider the problem 
	\begin{equation}\label{prob:eg01}
		\min_{x\in \R_+}\left\{\frac{x^2}{2} + \sqrt{x}\right\},
	\end{equation}
	where the minimizer is $x^\star = 0$. This problem can be formulated as a DC program with the standard DC decomposition
	\[
	\min_{x\in \R}\{ g(x) - h(x)\},
	\]
	where 
	\begin{equation}\label{eq:dcd01_eg01}
		g(x) = \frac{x^2}{2} + \chi_{\R_+}(x), \quad \text{and} \quad  h(x) = 
		\begin{cases}
			-\sqrt{x}, & \text{if } x\geq 0,\\
			+\infty, & \text{otherwise.}
		\end{cases}
	\end{equation}
	Here, \AssumpA\ is satisfied since both $g$ and $h$ belong to $\Gamma_0(\R)$, with $\dom g = \dom h = \R_+$. However, $\dom \partial h = (0,\infty)$ and $\partial h(0) = \emptyset$. Applying DCA with $x^0 = 1$, we have $y^0 \in \partial h(x^0) = \{h'(1)\} = \{-1/2\}$. Then, $x^1 = \argmin \left\{x^2/2 + \chi_{\R_+}(x) - y^0 x\right\} = \{0\}$, which is not included in $\dom \partial h$. As a result, $y^1$ cannot be generated by DCA, making DCA not well-defined in this case.
	
	This example demonstrates that even with a DC decomposition where $\emptyset\neq \dom g = \dom h$, DCA may still fail to be well-defined. Furthermore, the minimizer $x^\star = 0$ is not a DC critical point since $\partial h(0) = \emptyset$. Therefore, DC criticality, defined by $\partial g(x^\star)\cap \partial h(x^\star) \neq \emptyset$, is not a necessary optimality condition for \eqref{prob:P}, as $\partial h(x^\star)$ or $\partial g(x^\star)$ could be empty at a local minimizer $x^\star$.
\end{example}

\begin{remark}\label{rmk:1}
	One may wonder if there is a DC representation for problem \eqref{prob:eg01} such that DCA is well-defined. The answer is YES. Consider the following DC decomposition:
	\begin{equation}
		\label{eq:dcd02_eg01}
		g(x) = \begin{cases}
			\frac{x^2}{2} - \sqrt{x}, & \text{if } x\geq 0,\\
			\infty, & \text{otherwise},
		\end{cases} \quad \text{and} \quad h(x) = \begin{cases}
			-2\sqrt{x}, & \text{if } x\geq 0,\\
			\infty, & \text{otherwise}.
		\end{cases}
	\end{equation}
	Again, \AssumpA\ is satisfied, and both $g$ and $h$ belong to $\Gamma_0(\R)$, with $\dom g = \dom h = \R_+$, $\dom \partial h = (0,\infty)$, and $\partial h(0) = \emptyset$. Applying DCA with any point \( x^k > 0 \), we have 
	\( y^k \in \partial h(x^k) = \{-1/\sqrt{x^k}\} \). Solving for \( x^{k+1} = \argmin \left\{ \frac{x^2}{2} - \sqrt{x} - y^k x : x\geq 0 \right\} \) using Cardano's formula, we obtain the update rule for \( x^{k+1} \) as follows:
	\begin{equation}\label{eq:seq_eg01}
		x^{k+1} = \left( \sqrt[3]{\frac{1}{4} + \sqrt{\Delta_k}} + \sqrt[3]{\frac{1}{4} - \sqrt{\Delta_k}} \right)^2 > 0,
	\end{equation}
	where \( \Delta_k := \frac{1}{16} + \frac{1}{(3\sqrt{x^k})^{3}} > 0 \) whenever \( x^k > 0 \). It is straightforward to check that the sequence \( \{x^k\} \) generated by DCA (via \eqref{eq:seq_eg01}) from any initial point \( x^0 > 0 \) is well-defined, decreasing, and converges to the minimizer \( x^\star = 0 \).
\end{remark}

Example \ref{eg:01} and Remark \ref{rmk:1} indicate that we need more refined assumptions on the DC decomposition to guarantee the well-definedness of DCA. A straightforward answer is: \emph{DCA is well-defined, if and only if, for all $k\in \N$, $x^k\in \dom \partial h$ and \eqref{prob:Pk} has an optimal solution}. Next, we propose an additional assumption (\AssumpB) to guarantee the well-definedness of DCA.\\
\noindent
\fbox{
	\parbox{\dimexpr\textwidth-2\fboxsep-2\fboxrule\relax}{
		\noindent (\AssumpB)\label{assump:B}
		\begin{itemize}
			\item $\emptyset \neq \dom \partial (g+\chi_{\setC}) \subset \dom \partial h$.
			\item All subproblems \eqref{prob:Pk} have optimal solutions (may not be unique).
		\end{itemize}
	}
}
\begin{proposition}
	
	Under \textbf{Assumptions A and B}, the sequence $\{x^k\}$ generated by DCA for \eqref{prob:P} with any initial point $x^0\in \dom \partial h$ is well-defined.
	
\end{proposition}
\begin{proof}
	Given $x^k\in \dom \partial h\neq \emptyset$, the non-emptiness of the solution set of \eqref{prob:Pk} implies that there exists $x^{k+1}$ such that 
	$$y^k\in \partial (g+\chi_{\setC})(x^{k+1}).$$
	Hence $x^{k+1}\in \dom \partial (g+\chi_{\setC}).$
	Then by the inclusion $\dom \partial (g+\chi_{\setC}) \subset \dom \partial h$, we get
	$$x^{k+1}\in \dom \partial h.$$
	By induction, given $x^0\in \dom \partial h$, we deduce that $$x^k\in \dom \partial h\neq \emptyset, \forall k\in \N.$$
	Hence, the sequence $\{x^k\}$ generated by DCA is well-defined. 
\end{proof}

\paragraph*{Discussion}
\begin{itemize}
	\item In the DC decomposition \eqref{eq:dcd01_eg01}, \AssumpB\ is violated because 
	$\dom\partial g=[0,\infty)\not\subset\dom\partial h=(0,\infty)$. 
	By contrast, the decomposition \eqref{eq:dcd02_eg01} satisfies \AssumpB. 
	Consequently, DCA based on \eqref{eq:dcd01_eg01} is \emph{not} well-defined, whereas DCA based on \eqref{eq:dcd02_eg01} \emph{is} well-defined.
	
	\item \AssumpB\ should be viewed as a pre-KL condition: it guarantees that the algorithmic objects needed by the later convergence analysis are well-defined before any regularity argument is applied.
	
	\item Under \textbf{Assumptions A and B}, boundedness of the subgradient sequence $\{y^k\}$ is \emph{not} guaranteed in general. The decomposition \eqref{eq:dcd02_eg01} provides an illustrative case where $x^k\to 0$ (hence $\{x^k\}$ is bounded) but $y^k\to-\infty$ (thus $\{y^k\}$ is unbounded).
	
	\item \AssumpA\ is \emph{not} necessary for the well-definedness of DCA. 
	Consider $g(x)=x^2/2$ and $h(x)=e^x$. 
	The problem \eqref{prob:P} has no solution, so \AssumpA\ fails. 
	Nevertheless, DCA is still well-defined for any $x^0\in\R$: since $y^k=e^{x^k}$, the subproblem $\min_x\{g(x)-\langle y^k,x\rangle\}$ has a unique minimizer $x^{k+1}=e^{x^k}$. 
	Hence $x^k\to\infty$ and $f(x^k)=\tfrac12(x^k)^2-e^{x^k}\to-\infty$.
	
	\item The non-emptiness of the solution set of each convex subproblem \eqref{prob:Pk} in \AssumpB\ is automatically satisfied in many practical situations. 
	In particular:
	\begin{itemize}
		\item if $g$ is strongly convex on $\mathcal C$, then $x\mapsto g(x)-\langle y^k,x\rangle$ is strongly convex for every $y^k$, hence \eqref{prob:Pk} has a unique minimizer;
		\item if $\mathcal C$ is nonempty, compact, and convex, then the convex function $x\mapsto g(x)-\langle y^k,x\rangle$ attains its minimum over $\mathcal C$;
		\item more generally, for a fixed $y^k$, if $x\mapsto g(x)-\langle y^k,x\rangle$ is coercive or level-bounded on $\mathcal C$, then \eqref{prob:Pk} admits a minimizer.
	\end{itemize}	
	Regarding the existence of solutions to \eqref{prob:P}, a convenient sufficient condition is that $f$ is proper, closed, and coercive (or level-bounded) on $\mathcal C$. This ensures that $\min_{\mathcal C} f$ attains its infimum. These conditions are not required for DCA to be well-defined; they are provided as practical safeguards when the existence of global minimizers is desired.
\end{itemize}

\begin{remark}[Two common pitfalls]
	\leavevmode
	\begin{itemize}
		\item \textbf{“$f$ being lower bounded $\Rightarrow$ \eqref{prob:P} has a solution.”} False in general.  
		Counterexample: $g(x)=e^x$, $h(x)=0$. Then $f(x)=e^x$ is bounded below by $0$ on $\R$, but the minimum is not attained; consequently \eqref{prob:Pk} (here, $\min\{e^x:x\in\R\}$) has no solution and DCA is not well-defined.
		
		\item \textbf{“Level-bounded $f$ $\Rightarrow$ \eqref{prob:P} has a solution / $f$ is lower bounded / $\{y^k\}$ is bounded.”} All are false without extra hypotheses.  
		Counterexample 1: $f(x)=\ln x$ for $x\in(0,\infty)$ and $f(x)=+\infty$ otherwise. Then $f$ is level-bounded, yet $\min f$ is not attained and $f$ is not lower bounded on $\R$ (also $f$ is not lower semicontinuous as an extended-real-valued function).  
		Counterexample 2: in Example~\ref{eg:01} with the DC representation \eqref{eq:dcd02_eg01}, $f$ is level-bounded while $\{y^k\}$ is unbounded even though $\{x^k\}$ is bounded.
	\end{itemize}
\end{remark}

\section{Classical Convergence Properties of DCA}\label{sec:classicalcvofdca}

In this section, we revisit several well-known lemmas (see, e.g., \cite{Pham1997}) that highlight the classical convergence properties of DCA. Although these results are known, they were gradually discovered by different authors over time under various assumptions, making the conclusions scattered and fragmented. Here we provide clean and concise proofs for each under our proper assumptions, as they will serve as foundational templates for establishing the convergence of many DCA variants. This section is intentionally preparatory rather than novelty-driven: its purpose is to isolate the exact DCA identities that will later feed into the unified framework and its variants. In the sequel, \textbf{we assume that the sequence $\{x^k\}$ generated by DCA, starting from an initial point $x^0 \in \dom \partial h$ for \eqref{prob:P} under \AssumpA, is well-defined}.

\subsection[Classical Convergence Results]{Classical Convergence Results of $\{f(x^k)\}$ and $\{\|x^{k+1}-x^{k}\|\}$}
\begin{lemma}[Non-Increasing and Convergence of $\{f(x^k)\}$]\label{lemma:nonincreasingoffxk}
	The sequence $\{f(x^k)\}$ is non-increasing and convergent.
\end{lemma}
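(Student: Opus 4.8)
The plan is to derive the one-step descent inequality $f(x^{k+1})\le f(x^k)$ from the two facts that characterise a DCA step — the subgradient inequality for $h$ at $x^k$, and the optimality of $x^{k+1}$ in the convex subproblem — and then to conclude convergence by monotone boundedness. To treat \eqref{prob:Pc} and \eqref{prob:P} uniformly I set $\tilde g:=g+\chi_{\setC}\in\Gamma_0(\R^n)$, so that the subproblem solved at iteration $k$ is $x^{k+1}\in\argmin\{\tilde g(x)-\langle y^k,x\rangle:x\in\R^n\}$ and $f=\tilde g-h$ on $\setC$; the standard case is recovered by $\setC=\R^n$, $\tilde g=g$.

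First, since $y^k\in\partial h(x^k)$, convexity of $h$ yields $h(x^{k+1})\ge h(x^k)+\langle y^k,x^{k+1}-x^k\rangle$. Second, since $x^{k+1}$ minimises $\tilde g(\cdot)-\langle y^k,\cdot\rangle$ and $x^k\in\dom\tilde g$ (true for $k\ge 1$ because $x^k$ solved the previous subproblem, and assumed at $k=0$, or else one simply starts the count at $k=1$), comparing the values of $\tilde g(\cdot)-\langle y^k,\cdot\rangle$ at $x^{k+1}$ and at $x^k$ gives $\tilde g(x^{k+1})-\tilde g(x^k)\le\langle y^k,x^{k+1}-x^k\rangle$; in particular $x^{k+1}\in\dom\tilde g$, and since also $x^{k+1}\in\dom\partial h\subset\dom h$ by well-definiteness, $f(x^{k+1})$ is a finite real number. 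Subtracting the first inequality from the second,
\[
f(x^{k+1})-f(x^k)=\bigl[\tilde g(x^{k+1})-\tilde g(x^k)\bigr]-\bigl[h(x^{k+1})-h(x^k)\bigr]\le\langle y^k,x^{k+1}-x^k\rangle-\langle y^k,x^{k+1}-x^k\rangle=0,
\]
which shows that $\{f(x^k)\}$ is non-increasing.

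For convergence, note that by \AssumpA\ the problem \eqref{prob:Pc} has a nonempty solution set, hence its optimal value is a finite real number and $f(x^k)\ge\inf_{x\in\setC}f(x)>-\infty$ for every $k$. A non-increasing sequence of reals that is bounded below converges, which gives the claim.

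I do not expect a genuine obstacle here: this is the classical DCA descent estimate. The only points requiring a little care are the domain bookkeeping — checking that every $f(x^k)$ is a finite real, so that no $\infty-\infty$ enters the inequalities — and the unified treatment of the constrained and unconstrained models via $\tilde g=g+\chi_{\setC}$, after which the subgradient-plus-optimality sandwich closes immediately.
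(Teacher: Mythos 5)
Your argument is correct and follows essentially the same route as the paper: the subgradient inequality for $h$ at $x^k$ combined with the optimality of $x^{k+1}$ in the convex subproblem (compared against the feasible point $x^k$) yields the descent, and convergence follows from monotonicity plus the lower bound on $f$ over $\setC$ guaranteed by \AssumpA. Your extra domain bookkeeping and the unified treatment via $\tilde g=g+\chi_{\setC}$ are harmless refinements of the same proof.
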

\begin{proof}
	For every $k=0,1,\ldots$, DCA generates the sequence $\{x^k\}$ as
	\begin{equation}
		\label{eq:update}
		x^{k+1}\in \argmin\{g(x) - \langle y^k , x \rangle : x\in \setC\},
	\end{equation}
	where $y^k\in \partial h(x^k)$. By the convexity of $h$ and $y^k\in \partial h(x^k)$, we have
	\begin{equation}
		\label{eq:convexityofh}
		h(x^{k+1})\geq h(x^{k}) + \langle x^{k+1}-x^{k}, y^k \rangle.
	\end{equation}
	Then, for every $k\in \N$, we have 
	\begin{eqnarray*}
		f(x^{k+1})&=&g(x^{k+1})-h(x^{k+1})\\
		&\overset{\eqref{eq:convexityofh}}{\leq}&g(x^{k+1})-\left( h(x^k) + \langle x^{k+1} - x^k,y^k \rangle \right)\\
		&\overset{\eqref{eq:update}}{=}& \min\{ g(x) - \langle x,y^k \rangle  : x\in \setC \} -  h(x^k) + \langle x^k,y^k \rangle\\
		&\leq& g(x^k) - h(x^k) = f(x^k),
	\end{eqnarray*}
	which implies that $\{f(x^k)\}$ is non-increasing. \\
	The non-emptiness of the solution set of \eqref{prob:P} suggests that $f$ is lower bounded over $\setC$. Consequently, the convergence of $\{f(x^k)\}$ follows from its non-increasing and the lower boundedness.
\end{proof}

Now, consider the case where $g$ is $\rho_g$-convex\footnote{A function $g\in \Gamma_0(\R^n)$ is called $\rho_g$-convex over a convex set $\setC\subset \R^n$, if and only if, for all $y\in \partial g(x)\neq \emptyset$, $x\in \setC$ and $z\in \setC$, we have $g(z)\geq g(x) + \langle z-x, y \rangle + \frac{\rho_g}{2}\|z-x\|^2$.} over $\setC$ (with a modulus $\rho_g\geq 0$) and $h$ is $\rho_h$-convex over $\setC$ (with a modulus $\rho_h\geq 0$), we have the next lemma:
\begin{lemma}\label{lemma:suffdescent&squaresummable}
	Assuming $\rho_g+\rho_h>0$ over $\mathcal{C}$, the following properties hold: 
	\begin{itemize}
		\item(\textbf{Sufficient Descent Property}) \begin{equation}
			\label{eq:suffdecprop}
			f(x^k) - f(x^{k+1}) \geq \frac{\rho_g+\rho_h}{2}\|x^k-x^{k+1}\|^2, \quad\forall k=1,2,\ldots.
		\end{equation}
		\item (\textbf{Square Summable Property}) $$\sum_{k\geq 0} \|x^{k+1}-x^{k}\|^2<\infty,$$
		hence $\|x^{k+1}-x^{k}\|\to 0$ as $k\to \infty$.
	\end{itemize}
\end{lemma}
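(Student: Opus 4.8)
The plan is to sharpen the inequality chain used in the proof of Lemma~\ref{lemma:nonincreasingoffxk} by keeping track of the quadratic remainders that strong convexity supplies. First I would isolate two facts. (i) If $h$ is $\rho_h$-strongly convex on $\setC$ (we allow $\rho_h \ge 0$), then for $y^k \in \partial h(x^k)$ one has the strengthened subgradient inequality $h(x^{k+1}) \ge h(x^k) + \langle x^{k+1}-x^k, y^k\rangle + \frac{\rho_h}{2}\|x^{k+1}-x^k\|^2$. (ii) If $g$ is $\rho_g$-strongly convex on $\setC$, then the subproblem objective $g(\cdot) - \langle y^k, \cdot\rangle$ is $\rho_g$-strongly convex on $\setC$, so its minimizer $x^{k+1}$ satisfies the quadratic-growth bound $g(x) - \langle y^k, x\rangle \ge g(x^{k+1}) - \langle y^k, x^{k+1}\rangle + \frac{\rho_g}{2}\|x - x^{k+1}\|^2$ for every $x \in \setC$. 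The one delicate point is that I want to apply (ii) at $x = x^k$, which requires $x^k \in \setC$; since for $k\ge 1$ the iterate $x^k$ is an optimal solution of the subproblem \eqref{prob:Pk} solved at iteration $k-1$, this is guaranteed, which is exactly why the descent estimate is stated only for $k\ge 1$.

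Next I would redo the computation of Lemma~\ref{lemma:nonincreasingoffxk} verbatim, now retaining the quadratic terms: starting from $f(x^{k+1}) = g(x^{k+1}) - h(x^{k+1})$, substitute (i) to bound $-h(x^{k+1})$ from above, regroup the linear parts into $(g(x^{k+1}) - \langle y^k, x^{k+1}\rangle) - (h(x^k) - \langle y^k, x^k\rangle)$, and then apply (ii) with $x = x^k$. The $\langle y^k, \cdot\rangle$ terms cancel exactly as before, what remains of the $g$- and $h$-values is $g(x^k) - h(x^k) = f(x^k)$, and the two quadratic terms combine into $-\frac{\rho_g+\rho_h}{2}\|x^k - x^{k+1}\|^2$, yielding the sufficiently descent property $f(x^k) - f(x^{k+1}) \ge \frac{\rho_g+\rho_h}{2}\|x^k - x^{k+1}\|^2$ for all $k\ge 1$. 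Note that it suffices that \emph{one} of $g,h$ be strongly convex: the modulus of the other may be set to $0$, in which case the corresponding inequality in (i) or (ii) simply degenerates to the plain convexity/optimality inequality already used in Lemma~\ref{lemma:nonincreasingoffxk}.

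For the square summable property I would sum the descent estimate over $k = 1, \dots, N$; the right-hand side telescopes to $f(x^1) - f(x^{N+1})$, which by Lemma~\ref{lemma:nonincreasingoffxk} is bounded above by $f(x^1) - \lim_{k\to\infty} f(x^k) < \infty$. Since $\rho_g + \rho_h > 0$, dividing by $(\rho_g+\rho_h)/2$ and letting $N \to \infty$ gives $\sum_{k\ge 1}\|x^{k+1}-x^k\|^2 \le \frac{2\,(f(x^1) - \lim_{k} f(x^k))}{\rho_g+\rho_h} < \infty$; adding the single finite number $\|x^1 - x^0\|^2$ upgrades this to $\sum_{k\ge 0}\|x^{k+1}-x^k\|^2 < \infty$, and convergence of a series forces its general term $\|x^{k+1}-x^k\|$ to $0$. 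I do not anticipate a genuine obstacle here; the only things that need care are the index shift (descent from $k\ge 1$, the summation then padded by the $k=0$ term) and making sure (ii) is invoked at a feasible point $x^k \in \setC$.
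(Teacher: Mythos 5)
Your proposal is correct and follows the same overall strategy as the paper: strengthen the two inequalities from Lemma~\ref{lemma:nonincreasingoffxk} with the quadratic terms supplied by strong convexity, observe that $x^k\in\setC$ for $k\ge 1$ (hence the restriction to $k\ge 1$), and then telescope the descent estimate and use the lower bound on $f$ over $\setC$ for square summability. The one step where you genuinely deviate is how the strong convexity of $g$ enters: the paper writes the first-order optimality condition of \eqref{prob:Pk} as $0\in\partial g(x^{k+1})+N_{\setC}(x^{k+1})-y^k$, picks $v\in N_{\setC}(x^{k+1})$, applies the $\rho_g$-strong subgradient inequality at the subgradient $w=y^k-v$, and kills the extra term via $\langle x^k-x^{k+1},v\rangle\le 0$; you instead invoke quadratic growth of the $\rho_g$-strongly convex subproblem objective $g(\cdot)-\langle y^k,\cdot\rangle$ at its constrained minimizer $x^{k+1}$, evaluated at $x=x^k\in\setC$. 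The two routes produce the identical inequality, but yours is slightly more self-contained: quadratic growth at a constrained minimizer follows from convexity alone (take convex combinations $(1-t)x^{k+1}+tx^k$ and let $t\downarrow 0$), so you avoid the decomposition $\partial(g+\chi_{\setC})=\partial g+N_{\setC}$, which strictly speaking rests on a sum rule (valid, e.g., under $\ri(\dom g)\cap\ri(\setC)\neq\emptyset$); the paper's route, in exchange, makes the normal-cone vector explicit, which is reused in the criticality statement $y^k\in\partial h(x^k)\cap(\partial g(x^{k+1})+N_{\setC}(x^{k+1}))$. Your index bookkeeping for the square summable part (summing from $k=1$ and adding the finite $k=0$ term) matches the paper's argument.
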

\begin{proof}
	\textbf{(Sufficient Descent Property)}: For every $k=0,1,\ldots$, we have the following inequalities:\\
	$\rhd$ By the $\rho_h$-convexity of $h$ and $y^k\in \partial h(x^k)$, we have
	\begin{equation}
		\label{eq:strongconvexityofh}
		\boxed{h(x^{k+1})\geq h(x^{k}) + \langle x^{k+1}-x^{k}, y^k \rangle + \frac{\rho_h}{2}\|x^{k+1}-x^{k}\|^2.}
	\end{equation}
	$\rhd$ By the $\rho_g$-convexity of $g$, we have 
	\begin{equation}
		\label{eq:strongconvexityofg}
		\boxed{g(x^{k})\geq g(x^{k+1}) + \langle x^{k}-x^{k+1}, w \rangle + \frac{\rho_g}{2}\|x^{k+1}-x^{k}\|^2, \quad \forall w\in \partial g(x^{k+1}).}
	\end{equation}
	$\rhd$ By the first order optimality condition for the convex problem
	$$x^{k+1}\in \argmin\{g(x) - \langle y^k , x \rangle : x\in \setC\}$$
	where $y^k\in \partial h(x^k)$, we have
	$$0\in \partial g(x^{k+1}) + N_{\setC}(x^{k+1}) - y^k,$$
	where $N_{\setC}(x^{k+1})$ denotes the normal cone of $\setC$ at $x^{k+1}$.
	Thus $$y^k \in\partial h(x^k) \cap (\partial g(x^{k+1}) + N_{\setC}(x^{k+1})).$$ Then, by $y^k \in\partial g(x^{k+1}) + N_{\setC}(x^{k+1})$, we get that there exist $v\in N_{\setC}(x^{k+1})$ and $w\in \partial g(x^{k+1})$ such that $y^k = w+v$. Substituting $w$ with $y^k-v$ in \eqref{eq:strongconvexityofg}, we obtain 
	\begin{equation}\label{eq:strongconvexityofg-bis}
		\boxed{g(x^{k}) + \langle x^{k+1}-x^{k}, y^k-v \rangle - \frac{\rho_g}{2}\|x^{k+1}-x^{k}\|^2 \geq g(x^{k+1}).}
	\end{equation}
	$\rhd$ It follows from $v\in N_{\setC}(x^{k+1})$ and $x^k\in \setC$ that  
	\begin{equation}
		\label{eq:normconeineq}
		\boxed{\langle x^k-x^{k+1}, v \rangle \leq 0.}
	\end{equation}
	Now, we can deduce that
	\begin{eqnarray*}
		f(x^{k+1})&=&g(x^{k+1})-h(x^{k+1})\\
		&\overset{\eqref{eq:strongconvexityofh}}{\leq}&g(x^{k+1})-\left( h(x^k) + \langle x^{k+1} - x^k,y^k \rangle + \frac{\rho_h}{2}\|x^{k+1}-x^{k}\|^2 \right)\\
		&\overset{\eqref{eq:strongconvexityofg-bis}}{\leq}& g(x^k) - h(x^k) + \langle x^{k}-x^{k+1}, v \rangle - \frac{\rho_g+\rho_h}{2}\|x^{k+1}-x^{k}\|^2\\
		&\overset{\eqref{eq:normconeineq}}{\leq}& f(x^k) - \frac{\rho_g+\rho_h}{2}\|x^{k+1}-x^{k}\|^2, \forall k=1,2,\ldots.
	\end{eqnarray*}
	\textbf{(Square Summable Property)}: Summing \eqref{eq:suffdecprop} for $k$ from $1$ to $N$ ($\geq 1$), we have 
	\begin{align*}
		\sum_{k=1}^{N} \frac{\rho_g+\rho_h}{2}\|x^k-x^{k+1}\|^2 &\leq \sum_{k=1}^{N}  f(x^k) - f(x^{k+1}) \\
		&= f(x^1)-f(x^{N+1}) \\
		&\leq f(x^1) - \min\{f(x):x\in \setC\},
	\end{align*}
	where the last inequality holds for any $N\geq 1$. \\
	Taking $N\to \infty$, and by the lower boundedness of $f$ over $\setC$, then
	$$\sum_{k\geq 1} \frac{\rho_g+\rho_h}{2}\|x^k-x^{k+1}\|^2 < \infty.$$
	It follows immediately by $\rho_g+\rho_h>0$ and the finiteness of $\|x^0-x^1\|$ that
	$$\sum_{k\geq 0}\|x^k-x^{k+1}\|^2 < \infty,$$
	and as a consequence, $\|x^k-x^{k+1}\|\to 0$ as $k\to \infty$.
\end{proof}

\paragraph*{Discussion} 
\begin{itemize}
	\item The result $\|x^k-x^{k+1}\|\to 0$ can also be derived from the sufficient descent property and the convergence of $\{f(x^k)\}$ as
	$$0\leq \frac{\rho_g+\rho_h}{2} \|x^k-x^{k+1}\|^2 \leq f(x^k) - f(x^{k+1}) \xrightarrow{k\to \infty} 0.$$
	
	\item Without the assumption $\rho_g + \rho_h>0$, the sequence $\{\|x^k - x^{k+1}\|\}$ may not converge to $0$. Consider the following example:
	\begin{example}\label{eg:02}
		Let $g(x) = \sup\{-x,0,x-1\}$, $h(x) = \sup\{-x,0\}$, and $C= \R$. The functions $g$ and $h$ are piecewise linear and convex (polyhedral convex), but neither strongly convex nor strictly convex. Starting DCA from an initial point $x^0 \in (0,1)$, we have $\partial h(x^0) = \{0\}$, and $x^1 \in \argmin \{g(x)\} = [0,1]$. Hence, DCA could generate a sequence $\{x^k\}$ as $\{0.1, 0.9, 0.1, 0.9, \cdots\}$. Thus, $\{\|x^k-x^{k+1}\|\}$ is a constant sequence $\{0.8, 0.8, \cdots\}$ whose limit is nonzero, and the square summable property is not satisfied. Note that the sequence $\{f(x^k)\}$ is the constant zero-sequence, which is convergent but does not satisfy the sufficient descent property. 
	\end{example}
	
	\item In practice, the condition $\rho_g+\rho_h>0$ does not impose a significant restriction. This is because for any DC function $f:x\mapsto g(x) - h(x)$, where \AssumpA~holds, we can introduce a $\rho$-convex ($\rho>0$) function $\varphi \in \Gamma_0(\R^n)$ such that $\dom \varphi \supset \dom h$. This allows us to derive a new DC decomposition $(g+\varphi)(x) - (h+\varphi)(x)$, where both $g+\varphi$ and $h+\varphi$ are $\rho$-convex $\Gamma_0(\R^n)$ functions, i.e., $\rho_{g+\varphi} + \rho_{h+\varphi} = 2\rho > 0$ is satisfied. A commonly used example is to take $\varphi(x) = \frac{\rho}{2}\|x\|^2$ for any $\rho>0$, then the DCA associated with this DC decomposition is expressed as:
	\[
	y^k \in \partial h(x^k),\qquad 
	x^{k+1}=\argmin_x\Big\{g(x)+\tfrac{\rho}{2}\|x\|^2-\langle \rho x^k+y^k,\,x\rangle\Big\}.
	\]
	Completing the square gives
	\[
	x^{k+1}=\prox_{\,g/\rho}\!\Big(x^k+\tfrac{y^k}{\rho}\Big),
	\]
	which is essentially the proximal point method applied to the DC function $g - h$. Here the proximal operator of $g\in\Gamma_0(\mathbb{R}^n)$ is
    $$\prox_g(z):= \underset{x}{\argmin}\{ g(x) + \frac{1}{2}\|x-z\|^2\}.$$
	
	\item In some variants of DCA, it is not the sequence $\{f(x^k)\}$ that satisfies the sufficient descent property, but rather the sequence of an auxiliary function (e.g., Lyapunov function or energy function). For example, the proximal DCA with extrapolation (pDCAe) proposed in \cite{wen2018proximal} is a variant of DCA that introduces Nesterov's extrapolation into DCA using a proximal DC decomposition. The sequence $\{f(x^k)\}$ is non-monotone, but we can prove in a manner similar to Lemma \ref{lemma:suffdescent&squaresummable} that the sequence $\{\zeta^k := f(x^k) + \frac{L}{2}\|x^k-x^{k-1}\|^2\}$ is monotone and enjoys the sufficient descent property, which further guarantees the convergence of the non-monotone sequence $\{f(x^k)\}$ and the square summable property.
\end{itemize}

\subsection{Classical Convergence Rates of DCA}
\begin{corollary}[Classical Convergence Rates of DCA]\label{cor:conv_rate_1/N}
	Let $f^\star=\lim_{k\to \infty} f(x^k)$. The following convergence rates hold: 
	\begin{itemize}
		\item \textbf{(Convergence Rate for $\{f(x^k) - f(x^{k+1})\}$)}
		$$\frac{1}{N+1}\sum_{k=0}^{N} f(x^k) - f(x^{k+1}) \leq \frac{f(x^0) - f^\star}{N+1} = O\left(\frac{1}{N}\right),$$
		$$\min_{0\leq k\leq N} \{f(x^k) - f(x^{k+1})\} \leq  O\left(\frac{1}{N}\right).$$
		\item \textbf{(Convergence Rate for $\{\|x^{k+1} - x^{k}\|\}$)} If $\rho_g+\rho_h>0$ over $\setC$, then
		$$\frac{1}{N+1}\sum_{k=0}^{N} \|x^{k+1}-x^{k}\|^2 \leq \frac{2(f(x^0) - f^\star)}{(\rho_g+\rho_h)(N+1)} = O\left(\frac{1}{N}\right),$$
		$$\min_{0\leq k\leq N} \{\|x^{k+1}-x^{k}\|\} \leq O\left(\frac{1}{\sqrt{N}}\right).$$
	\end{itemize}
\end{corollary}
\begin{proof}
	\textbf{(Convergence Rate for $\{f(x^k) - f(x^{k+1})\}$)}: By the non-increasing and convergence of the sequence $\{f(x^k)\}$ (Lemma \ref{lemma:nonincreasingoffxk}), we have 
	$$\sum_{k=0}^N f(x^k) - f(x^{k+1}) \leq \sum_{k\geq 0} f(x^k) - f(x^{k+1}) = f(x^0) - f^\star.$$
	Hence $$\frac{1}{N+1}\sum_{k=0}^{N} f(x^k) - f(x^{k+1}) \leq \frac{f(x^0) - f^\star}{N+1} = O\left(\frac{1}{N}\right).$$
	It follows that 
	$$\min_{0\leq k\leq N} \{f(x^k) - f(x^{k+1})\} \leq \frac{1}{N+1}\sum_{k=0}^{N} f(x^k) - f(x^{k+1}) \leq  O\left(\frac{1}{N}\right).$$
	\textbf{(Convergence Rate for $\{\|x^{k+1} - x^{k}\|\}$)}: We get from the proof of Lemma \ref{lemma:suffdescent&squaresummable} that
	$$\sum_{k=0}^{N} \frac{\rho_g+\rho_h}{2}\|x^k-x^{k+1}\|^2 \leq 
	f(x^0) - \min\{f(x):x\in \setC\} = f(x^0) - f^\star.
	$$
	Hence,
	$$\sum_{k=0}^{N} \|x^k-x^{k+1}\|^2 \leq 
	\frac{2(f(x^0) - f^\star)}{\rho_g+\rho_h}.$$
	It follows that
	$$\frac{1}{N+1}\sum_{k=0}^{N} \|x^{k+1}-x^{k}\|^2 \leq \frac{2(f(x^0) - f^\star)}{(\rho_g+\rho_h)(N+1)} = O\left(\frac{1}{N}\right).$$
	Hence, \begin{align*}
		\min_{0\leq k\leq N} \{\|x^k-x^{k+1}\|\} &\leq \sqrt{\frac{1}{N+1}\sum_{k=0}^{N} \|x^{k+1}-x^{k}\|^2}\\
		&\leq 
		\sqrt{\frac{2(f(x^0) - f^\star)}{(\rho_g+\rho_h)(N+1)}}
		= O\left(\frac{1}{\sqrt{N}}\right).
	\end{align*}
\end{proof}
\paragraph*{Discussion}
\begin{itemize}
	\item Corollary \ref{cor:conv_rate_1/N} indicates that the convergence rate of DCA depends on $f(x^0) - f^\star$ (i.e., the quality of the initial point) for the sequence $\{f(x^k) - f(x^{k+1})\}$, and on $\rho_g + \rho_h$ (i.e., the quality of the DC decomposition) for the sequence $\{\|x^{k+1} - x^k\|\}$. A smaller $f(x^0) - f^\star$ and a larger $\rho_g + \rho_h$ lead to faster convergence. The convergence rate is generally sublinear, though it is possible to derive a linear convergence rate under stronger assumptions. We will explore improved convergence rates under the Kurdyka--{\L}ojasiewicz property in \S\ref{sec:convergencerate}.
	\item Regarding Corollary \ref{cor:conv_rate_1/N}, using $\|x^{k+1} - x^k\| \leq \varepsilon$ or $|f(x^{k+1}) - f(x^k)| \leq \varepsilon$ for some given tolerance $\varepsilon > 0$ as stopping criteria for DCA (a commonly used approach) is extremely risky, as there is no guarantee of the optimality of the computed solution. For example, if we choose any initial point $x^0$ and select a DC decomposition $g-h$ and $\varepsilon$ such that
	$$\rho_g + \rho_h > \frac{f(x^0) - f^\star}{\varepsilon^2},$$
	then Corollary \ref{cor:conv_rate_1/N} implies that DCA will terminate after just one iteration, without any assurance of the optimality of $x^0$! Therefore, it is crucial to develop a rigorous stopping criterion that ensures the optimality of the computed solution; however, this is beyond the scope of the present manuscript. Throughout this paper, we will consider DCA without a stopping condition, allowing the generation of infinite sequences, and study their convergence properties.
\end{itemize}

\subsection{DC Optimality Conditions}
\subsubsection{DC Criticality}

We recall the classical optimality condition for the DC program, known as \emph{DC criticality}, which is defined as follows:

\begin{definition}[DC Critical Point]
	A point \(x^\star\) is called a \emph{DC critical point} of the DC program \eqref{prob:P} if and only if
	\begin{equation}
		\label{DC_Criticality}
		\partial (g + \chi_{\setC})(x^\star) \cap \partial h(x^\star) \neq \emptyset.
	\end{equation}
	In particular:
	\begin{itemize}
		\item If \(\emptyset \neq \partial h(x^\star) \subset \partial (g + \chi_{\setC})(x^\star)\), then \(x^\star\) is called a \emph{strongly DC critical point}.
		\item If \(\setC = \R^n\), then the DC criticality condition \eqref{DC_Criticality} simplifies to 
		\[
		\partial g(x^\star) \cap \partial h(x^\star) \neq \emptyset,
		\]
		or equivalently,
		\[
		0 \in \partial g(x^\star) - \partial h(x^\star).
		\]
		\item If \(\ri(\dom g) \cap \ri(\setC) \neq \emptyset\), then the DC criticality condition \eqref{DC_Criticality} reduces to:
		\[
		\left(\partial g(x^\star) + N_{\setC}(x^\star)\right) \cap \partial h(x^\star) \neq \emptyset,
		\]
		where \(N_\setC(x^\star)\) is the normal cone of \(\setC\) at \(x^\star\), defined as
		\[
		N_\setC(x^\star) := \{y \in \R^n : \langle y, x - x^\star \rangle \leq 0, \ \forall x \in \setC \}.
		\]
	\end{itemize}
\end{definition}

\begin{theorem}[Subsequential Convergence of $\{x^k\}$ to DC Critical Points]\label{thm:subseqconv}
	Let $\{x^k\}$ and $\{y^k\} \subset \{\partial h(x^k)\}$ be two well-defined and bounded sequences generated by DCA starting from an initial point $x^0 \in \dom \partial h$ for the DC program \eqref{prob:P} under \AssumpA. If $\rho_g + \rho_h > 0$ over $\setC$, then any cluster point of the sequence $\{x^k\}$ is a DC critical point of \eqref{prob:P}.
\end{theorem}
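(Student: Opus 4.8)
The plan is to select a cluster point $x^*$ of $\{x^k\}$, extract along it a subsequence on which $\{y^k\}$ also converges --- which is legitimate because $\{y^k\}$ is assumed bounded --- and then pass to the limit in the two subdifferential inclusions that hold at every iteration, namely $y^k\in\partial h(x^k)$ by construction of DCA and $y^k\in\partial(g+\chi_{\setC})(x^{k+1})$ from the first-order optimality condition of the subproblem \eqref{prob:Pk}. The common limit $y^*$ will then witness DC criticality of $x^*$ for \eqref{prob:P'}, hence for \eqref{prob:Pc}.

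First I would fix a subsequence $x^{k_j}\to x^*$. Since $\rho_g+\rho_h>0$, Lemma \ref{lemma:suffdescent&squaresummable} gives $\|x^{k+1}-x^k\|\to 0$, so $x^{k_j+1}\to x^*$ as well. As $\{y^{k_j}\}$ is bounded, after passing to a further subsequence (not relabelled) I may assume $y^{k_j}\to y^*$ for some $y^*\in\R^n$, while still $x^{k_j}\to x^*$ and $x^{k_j+1}\to x^*$. Because $x^{k+1}$ minimizes the proper closed convex function $x\mapsto g(x)+\chi_{\setC}(x)-\langle y^k,x\rangle$, Fermat's rule yields $y^k\in\partial(g+\chi_{\setC})(x^{k+1})$; keeping $g+\chi_{\setC}$ unsplit here avoids the need for any constraint qualification such as $\ri(\dom g)\cap\ri(\setC)\neq\emptyset$.

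The crux is then to pass to the limit, for which I would use the standard fact that the graph of $\partial\varphi$ is closed for every $\varphi\in\Gamma_0(\R^n)$: if $u_j\to u$, $v_j\to v$ and $v_j\in\partial\varphi(u_j)$, then the Fenchel--Young equality $\varphi(u_j)+\varphi^*(v_j)=\langle u_j,v_j\rangle$ together with the lower semicontinuity of $\varphi$ and $\varphi^*$ and $\langle u_j,v_j\rangle\to\langle u,v\rangle$ gives $\varphi(u)+\varphi^*(v)\le\langle u,v\rangle$, which by the Fenchel--Young inequality forces equality, i.e. $v\in\partial\varphi(u)$. Applying this with $\varphi=h$, $u_j=x^{k_j}$, $v_j=y^{k_j}$ produces $y^*\in\partial h(x^*)$; applying it with $\varphi=g+\chi_{\setC}$, $u_j=x^{k_j+1}$, $v_j=y^{k_j}$ produces $y^*\in\partial(g+\chi_{\setC})(x^*)$. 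Hence $y^*\in\partial(g+\chi_{\setC})(x^*)\cap\partial h(x^*)\neq\emptyset$, which is exactly DC criticality of $x^*$.

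The main obstacle --- really the only non-bookkeeping step --- is this limiting argument: it is where convexity and lower semicontinuity are genuinely used, and the closedness of the subdifferential graph should be invoked explicitly rather than taken for granted. Everything else is routine: the boundedness of $\{x^k\}$ is used only to ensure a cluster point exists, the boundedness of $\{y^k\}$ only to extract a convergent subsequence of $\{y^{k_j}\}$, and $\rho_g+\rho_h>0$ only through Lemma \ref{lemma:suffdescent&squaresummable} to get $x^{k_j+1}\to x^*$.
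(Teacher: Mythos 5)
Your proof is correct and follows essentially the same route as the paper: extract a subsequence $x^{k_j}\to x^*$, use Lemma \ref{lemma:suffdescent&squaresummable} to get $x^{k_j+1}\to x^*$, pass to a further subsequence on which $y^{k_j}$ converges, and conclude by closedness of the subdifferential graphs. The only differences are cosmetic improvements in rigor --- you keep $g+\chi_{\setC}$ unsplit (the paper writes $\partial g$ where $\partial(g+\chi_{\setC})$ is meant for \eqref{prob:Pc}) and you spell out the Fenchel--Young argument for graph closedness, which the paper simply invokes.
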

\begin{proof}
	By the boundedness of $\{x^k\}$, there exists a convergent subsequence of $\{x^k\}$ (by the Bolzano-Weierstrass theorem), denoted $\{x^{k_j}\}_j$, converging to a limit point $x^\star$. If either $g$ or $h$ is strongly convex over $\setC$, then we have $\|x^k - x^{k+1}\| \to 0$ by Lemma \ref{lemma:suffdescent&squaresummable}, which implies 
	$$x^{k_j} \to x^\star \text{ and } x^{k_j+1} \to x^\star.$$
	The first-order optimality condition for \eqref{prob:Pk} at $k = k_j$ is
	\begin{equation}\label{eq:subproboptcond}
		y^{k_j} \in \left(\partial g(x^{k_j+1}) + N_{\setC}(x^{k_j+1})\right) \cap \partial h(x^{k_j}).    
	\end{equation}
	By the boundedness of the sequence $\{y^{k_j}\}$, the set of cluster points of $\{y^{k_j}\}$ is non-empty. Without loss of generality, suppose that the sequence $\{y^{k_j}\}$ is convergent. Taking the limit in \eqref{eq:subproboptcond}, and using the closedness of the graphs of $\partial g$, $N_{\setC}$, and $\partial h$, we conclude that the limit point of $\{y^{k_j}\}$ is included in 
	$\left(\partial g(x^{*}) + N_{\setC}(x^{*})\right) \cap \partial h(x^{*})$. Thus, $\left(\partial g(x^{*}) + N_{\setC}(x^{*})\right) \cap \partial h(x^{*}) \neq \emptyset$, i.e., $x^\star$ is a DC critical point. 
\end{proof}

\begin{remark}
	The boundedness of the sequences $\{x^k\}$ and $\{y^k\}$ is crucial in the proof of Theorem \ref{thm:subseqconv}. However, note that \AssumpA\ and \AssumpB\ do not guarantee their boundedness. Example \ref{eg:01}, with the DC representation \eqref{eq:dcd02_eg01} (satisfying \AssumpA\ and \AssumpB), demonstrates that $x^k \to 0$ and $y^k \to -\infty$.
\end{remark}

The next theorem shows two important properties of a local minimizer of \eqref{prob:P}, where the first assertion is often overlooked.

\begin{theorem}
	\label{thm:DCoptcond}
	Let $x^\star$ be a local minimizer of \eqref{prob:P}. Then:
	\begin{itemize}
		\item $x^\star$ may not be a DC critical point, i.e., $\partial (g+\chi_{\setC})(x^\star) \cap \partial h(x^\star) \neq \emptyset$ may not hold.
		\item If both $g+\chi_{\setC}$ and $h$ are subdifferentiable at $x^\star$, then $x^\star$ is a strongly DC critical point, i.e., $\emptyset \neq \partial h(x^\star) \subset \partial (g+\chi_{\setC})(x^\star)$, which often coincides with d(irectional)-stationarity under certain technical assumptions (see e.g., $\ri(\dom g \cap \setC) \cap \ri(\dom h) \neq \emptyset$; \cite[Theorem 1]{DCA30}).
	\end{itemize}
\end{theorem}

\begin{proof}
	Example \ref{eg:01} demonstrates that the local minimizer $x^\star=0$ is not a DC critical point since $\partial h(0) = \emptyset$. For proving the second assertion, there exists a neighborhood $\mathcal{U}$ around a local minimizer $x^\star$ such that $$g(x^\star) +\chi_{\setC}(x^\star) - h(x^\star)\leq g(x) +\chi_{\setC}(x) - h(x), \forall x\in \setU.$$
	As $x^\star$ is a local minimizer, we have $x^\star\in \dom g \cap \setC$, implying that $\chi_{\setC}(x^\star)=0$. Hence,
	\begin{equation}
		\label{eq:thm:DCoptcond_eq1}
		g(x^\star)\leq g(x) +\chi_{\setC}(x)  + (h(x^\star) - h(x)), \forall x\in \setU.
	\end{equation}
	By the convexity of $h$ over $\setU$, we get for all $x\in \setU$ that 
	\begin{equation}
		\label{eq:thm:DCoptcond_eq2}
		h(x^\star) - h(x)\leq - \langle x - x^\star, y^*  \rangle, \forall y^*\in \partial h(x^\star).
	\end{equation}
	Combining \eqref{eq:thm:DCoptcond_eq1} and \eqref{eq:thm:DCoptcond_eq2}, we have 
	\begin{equation}
		\label{eq:thm:DCoptcond_eq3}
		g(x^\star)\leq g(x) +\chi_{\setC}(x)  -  \langle x - x^\star, y^*  \rangle, \forall x\in \setU.
	\end{equation}
	Hence, $x^\star$ is a local minimizer of $x\mapsto g(x) +\chi_{\setC}(x)  -  \langle x - x^\star, y^*  \rangle$ over $\setU$. The convexity of $g$ and $\chi_{\setC}$ implies that $x^\star$ is a global minimizer of the convex function $x\mapsto g(x) +\chi_{\setC}(x)  -  \langle x - x^\star, y^*  \rangle$ over $\R^n$, indicating that 
	$$y^*\in \partial (g + \chi_{\setC})(x^\star).$$
	This is true for all $y^*\in \partial h(x^\star)$. Hence $\partial h(x^\star)\subset \partial (g+\chi_{\setC})(x^\star)$. Combining the assumption that both $g + \chi_{\setC}$ and $h$ are subdifferentiable at $x^\star$, we get  
	$$\emptyset \neq \partial h(x^\star)\subset \partial (g+\chi_{\setC})(x^\star).$$
\end{proof}

\subsubsection{Weaknesses of DC Criticality}

The DC criticality condition is generally regarded as a weak optimality condition for DC programs. As previously observed in Example \ref{eg:01}, \textbf{a local minimizer of a DC program may not always be a DC critical point}, which is a fact often overlooked in the literature. Furthermore, even when a local minimizer is a DC critical point, Cui-Pang-Sen noted in \cite{cui2018composite} that:
\[
\begin{aligned}
	\text{Local minimum} &\subset \text{D(irectional) stationarity} \subset \text{L(imiting) stationarity} \\
	&\subset \text{C(larke) stationarity} \subset \text{DC criticality},
\end{aligned}
\]
which underscores the inherent weakness of the DC criticality. The following example demonstrates that a DC critical point may not necessarily be a local minimizer.

\begin{example}\label{eg:weaknessofdccriticality}
	Consider the functions 
	$$g(x) = \max\{0, x\} + \chi_{\{x \geq -1\}}(x), \quad h(x) = \max\{0, -x\}.$$
	Clearly, both \AssumpA\ and \AssumpB\ are satisfied. The point $x^\star = 0$ is a DC critical point because $\partial g(0) = [0,1]$, $\partial h(0) = [-1,0]$, and 
	$0 \in \partial g(0) - \partial h(0) = [0,2].$
	Starting DCA from the initial point $x^0 = 0$ can generate the zero-sequence by taking $y^k = 0 \in \partial h(0)$ and $x^{k+1} = 0 \in \argmin\{g(x)\} = [-1,0]$ for all $k \in \N$. However, $0$ is not a local minimizer of $\min\{g(x) - h(x)\}$.
\end{example}

It is worth noting the significant impact of a ``good DC decomposition'' and the ``choice of $y^k \in \partial h(x^k)$'' on the quality of the DC critical point.

\begin{itemize}
	\item \textbf{Influence of a Good DC Decomposition:} In Example \ref{eg:weaknessofdccriticality}, consider a different DC decomposition:
	$$g(x) = x + \chi_{\{x \geq -1\}}(x), \quad h(x) = 0.$$
	In this case, $\partial g(0) = \{1\}$ and $\partial h(0) = \{0\}$. Hence, $0$ is no longer a DC critical point. However, a DC critical point can be found at $-1$, where $\partial g(-1) = (-\infty,1]$ and $\partial h(-1) = \{0\}$. Starting DCA from any initial point $x^0 \in \dom \partial h = \R$, we obtain $x^k \in \argmin\{g(x)\} = \{-1\}$ for all $k \geq 1$. Therefore, DCA generates the sequence $\{x^0, -1, -1, \ldots\}$, which converges to $-1$, the optimal solution.
	
	\item \textbf{Influence of the Choice of $y^k \in \partial h(x^k)$:} In the original DC decomposition from Example \ref{eg:weaknessofdccriticality}, we have $\partial h(0) = [-1,0]$. If we choose $y^k \in [-1,0)$ (i.e., with $y^k \neq 0$), then $x^{k+1} \in \argmin\{g(x) - y^k x\} = \{-1\}$, and hence the sequence $\{x^k\}$ converges to the optimal solution $-1$.
\end{itemize}

\section{A Unified Convergence Framework for DCA}\label{sec:cvofdcawithL}

The iterate sequence $\{x^k\}$ generated by DCA need not converge even when the objective values $\{f(x^k)\}$ do (see Example~\ref{eg:02}). Thus, additional structure is required to guarantee convergence of $\{x^k\}$. Unlike abstract descent methods, DCA requires two extra pieces of bookkeeping before KL regularity can be used: the algorithm must be well-defined, and the residual estimate has to be extracted from the convex subproblems together with the chosen subgradients. In nonsmooth, nonconvex optimization, two widely used regularity assumptions are the \emph{{\L}ojasiewicz subgradient inequality} and the \emph{Kurdyka--{\L}ojasiewicz (KL) property}. These conditions impose mild geometric regularity that often yields global convergence and explicit rates for algorithmic iterates. Convergence of DCA under the {\L}ojasiewicz subgradient inequality was analyzed by Le Thi--Huynh--Pham~\cite{le2018convergence}. Building on this insight, we develop a unified framework that covers both the {\L}ojasiewicz subgradient inequality and the broader KL property, and use it to derive convergence and rate results for DCA.

\subsection[Key Assumptions for DCA Convergence]{Key Assumptions for the Convergence of $\{x^k\}$}
The following theorem provides key sufficient conditions for establishing the convergence of \(\{x^k\}\) (including DCA and its variants).

\begin{theorem}\label{thm:generalcvx}
	Let $\{x^k\}$ be a well-defined and bounded sequence with $x^k \neq x^{k+1}$ for all $k \in \N$. Suppose that:
	\begin{itemize}
		\item[] \Hone~\textbf{(Lyapunov Assumption)} There exists a (Lyapunov) function $\Psi:\R^n \to [-\infty, \infty]$ such that the sequence $\{\Psi(x^k)\}$ is well-defined, non-negative, and converging to 0.
		
		\item[] \Htwo~\textbf{(Sufficient Descent Assumption)} For sufficiently large $k$, there exists $D > 0$ such that 
		\begin{equation}
			\label{eq:H2}
			\Psi(x^k) - \Psi(x^{k+1}) \geq D \|x^k - x^{k+1}\|^2.
		\end{equation}
		
		\item[] \Hthree~\textbf{(Regularity Assumption)} For sufficiently large $k$, there exist 
		\begin{itemize}
			\item a differentiable concave function $\varphi:[0,\eta) \to \R_+$ (for some $\eta > \Psi(x^k)$, $\eta \leq \infty$) with $\varphi(0) = 0$ and $\varphi' > 0$ over $(0, \eta)$,
			\item two non-negative constants $C_1$ and $C_2$ with $C_1 + C_2 > 0$,
		\end{itemize}
		such that 
		\begin{equation}
			\label{eq:H3}
			\varphi'(\Psi(x^k)) \times (C_1\|x^k - x^{k+1}\| + C_2\|x^{k-1} - x^k\|) \geq 1.
		\end{equation}
	\end{itemize}
	Then, for sufficiently large $k$, we have the inequality
	\begin{equation}\label{eq:residualrelation}
		\frac{3}{4}\|x^k - x^{k+1}\| \leq  \frac{1}{4}\|x^{k-1} - x^k\| + \frac{\max\{C_1, C_2\}}{D} \left(\varphi(\Psi(x^k)) - \varphi(\Psi(x^{k+1}))\right),
	\end{equation}
	and the sequence $\{x^k\}$ is convergent. 
\end{theorem}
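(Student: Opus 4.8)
The plan is to run the now-standard Kurdyka--{\L}ojasiewicz telescoping argument, specialized to the three hypotheses \Hone--\Hthree. Write $d_k := \|x^k - x^{k+1}\|$ and $\Delta_k := \varphi(\Psi(x^k)) - \varphi(\Psi(x^{k+1}))$. The first step is to observe that $\{\Psi(x^k)\}$ is eventually non-increasing: this is immediate from \Htwo, since $\Psi(x^k) - \Psi(x^{k+1}) \geq D d_k^2 \geq 0$. Together with \Hone this also guarantees that for large $k$ both $\Psi(x^k)$ and $\Psi(x^{k+1})$ lie in the interval $[0,\eta)$ on which $\varphi$ is defined (using $\eta > \Psi(x^k)$ from \Hthree), and that $\varphi \geq 0$ there, since $\varphi(0)=0$ and $\varphi'>0$ force $\varphi$ to be nonnegative and non-decreasing.

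Next I would chain the three hypotheses into a single recursion on the residuals. Concavity of $\varphi$ gives $\varphi(\Psi(x^{k+1})) \leq \varphi(\Psi(x^k)) + \varphi'(\Psi(x^k))(\Psi(x^{k+1}) - \Psi(x^k))$, hence $\Delta_k \geq \varphi'(\Psi(x^k))(\Psi(x^k) - \Psi(x^{k+1}))$, and combining with \Htwo yields $\Delta_k \geq D\, d_k^2\, \varphi'(\Psi(x^k))$. Since $x^k \neq x^{k+1}$ and $C_1 + C_2 > 0$, the quantity $C_1 d_k + C_2 d_{k-1}$ is strictly positive, so \Hthree\ rearranges to $\varphi'(\Psi(x^k)) \geq (C_1 d_k + C_2 d_{k-1})^{-1} \geq \big(\max\{C_1,C_2\}(d_k + d_{k-1})\big)^{-1}$. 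Substituting and setting $M := \max\{C_1,C_2\}/D$ gives $d_k^2 \leq M\,\Delta_k\,(d_k + d_{k-1})$. Taking square roots and applying Young's inequality in the form $\sqrt{ab} \leq a + \tfrac14 b$ (i.e. $(\sqrt a - \tfrac12\sqrt b)^2 \ge 0$) with $a = M\Delta_k$, $b = d_k + d_{k-1}$ produces $d_k \leq M\Delta_k + \tfrac14 d_k + \tfrac14 d_{k-1}$, and rearranging is exactly \eqref{eq:residualrelation}.

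For convergence of $\{x^k\}$ I would sum \eqref{eq:residualrelation} over $k$ from some large $k_0$ to $N$. The $\Delta_k$ telescope to $\varphi(\Psi(x^{k_0})) - \varphi(\Psi(x^{N+1})) \leq \varphi(\Psi(x^{k_0}))$; after reindexing, $\sum_{k=k_0}^N \tfrac14 d_{k-1} \leq \tfrac14 d_{k_0-1} + \tfrac14\sum_{k=k_0}^N d_k$, which absorbs into the left-hand $\tfrac34\sum d_k$ leaving coefficient $\tfrac12$. Hence $\sum_{k\ge k_0} d_k \leq \tfrac12 d_{k_0-1} + 2M\,\varphi(\Psi(x^{k_0})) < \infty$ uniformly in $N$, so $\sum_k \|x^{k+1}-x^k\| < \infty$; the partial sums of the increments then form a Cauchy sequence, hence $\{x^k\}$ is Cauchy in $\R^n$ and converges. (Boundedness of $\{x^k\}$ is not actually needed for this last step; it becomes relevant only when \Hone--\Hthree\ are later verified for DCA.)

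The argument is routine once \Hone--\Hthree\ are granted, so I do not expect a serious obstacle; the only points needing care are getting the direction of the concavity inequality right (which is why establishing the monotonicity of $\{\Psi(x^k)\}$ first is essential, so that $\Delta_k \ge 0$), choosing the split in Young's inequality so the constants come out to exactly $3/4$ and $1/4$, and handling the single boundary term when reindexing the telescoping sum.
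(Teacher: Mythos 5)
Your proposal is correct and follows essentially the same route as the paper: chain the concavity of $\varphi$ with \Htwo\ and \Hthree\ to bound $\|x^k-x^{k+1}\|$ by a telescoping quantity, apply a Young-type inequality to extract the $3/4$–$1/4$ split, then sum and conclude that $\{x^k\}$ is Cauchy. The only cosmetic difference is that you apply Young's inequality in the form $\sqrt{ab}\leq a+\tfrac14 b$ after taking a square root, whereas the paper uses the equivalent form $a\leq a^2/b+b/4$ directly; the resulting inequality \eqref{eq:residualrelation} and the rest of the argument are identical.
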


\begin{proof}
	For large enough $k$, by the concavity of $\varphi$, we get 
	\begin{eqnarray*}
		\varphi(\Psi(x^{k})) - \varphi(\Psi(x^{k+1}))
		&\geq& \varphi'(\Psi(x^{k})) (\Psi(x^{k}) - \Psi(x^{k+1})) \\
		&\overset{\eqref{eq:H2}}{\geq}&D\|x^{k} - x^{k+1}\|^2 \varphi'(\Psi(x^{k}))\\
		&\overset{\eqref{eq:H3}}{\geq}&\frac{D\|x^{k} - x^{k+1}\|^2}{C_1\|x^{k} - x^{k+1}\| + C_2\|x^{k-1}-x^k\|} \\
		&\geq&\frac{D}{\max\{C_1,C_2\}}\frac{\|x^{k} - x^{k+1}\|^2 }{\|x^{k} - x^{k+1}\| + \|x^{k-1}-x^k\|}. 
	\end{eqnarray*}
	
	It follows from Young's inequality ($a\leq a^2/b + b/4$ with $a,b>0$) that
	
	$$
	\frac{3}{4}\|x^{k} - x^{k+1}\| - \frac{1}{4} \|x^{k-1}-x^k\| \leq \frac{\|x^{k} - x^{k+1}\|^2}{\|x^{k} - x^{k+1}\| + \|x^{k-1}-x^k\|}.
	$$
	Hence, for large enough $k$ (say $k\geq N$), we have 
	$$\boxed{\frac{3}{4}\|x^{k} - x^{k+1}\| \leq  \frac{1}{4}\|x^{k-1} - x^{k}\| + \frac{\max\{C_1,C_2\}}{D} \left(\varphi(\Psi(x^{k})) - \varphi(\Psi(x^{k+1}))\right).}$$
	Then, summing for $k$ from $N$ to $\infty$ and using the fact that $\varphi(\Psi(x^k))\to 0$ as $k\to \infty$, we get
	$$\frac{1}{2}\sum_{k=N}^{\infty} \|x^{k} - x^{k+1}\| \leq \frac{1}{4}\|x^{N-1} - x^{N}\| +   \frac{\max\{C_1,C_2\}}{D} \varphi(\Psi(x^{N})) < \infty,$$
	that is 
	$$\sum_{k=0}^{\infty} \|x^{k} - x^{k+1}\| < \infty,$$
	which implies that the sequence $\{x^k\}$ is Cauchy, thus convergent. \end{proof}
\paragraph*{Discussion}
\begin{itemize}
	\item The regularity assumption \Hthree\ is often an intrinsic property of $\Psi$, which can be derived from certain error bounds. We will explore this connection later in the proof of Lemma \ref{lem:H1-H3_P_Pc_KL}.
	\item In some variants of DCA, the term $D\|x^k - x^{k+1}\|^2$ in the sufficient descent assumption \Htwo\ may appear as $D\|x^{k-1} - x^k\|^2$. The convergence of $\{x^k\}$ still holds with a similar proof. 
\end{itemize}

\subsection{Convergence of DCA under the {\L}ojasiewicz Subgradient Inequality}
We now analyze the global convergence of the DCA iterates $\{x^k\}$ via Theorem~\ref{thm:generalcvx}, under the central tool of the \emph{{\L}ojasiewicz subgradient inequality}, originally established by Bolte–Daniilidis–Lewis~\cite[Theorem~3.1]{bolte2007lojasiewicz}. We begin by recalling this inequality, and then derive the convergence of $\{x^k\}$ in two settings: the standard DC program and the convex-constrained DC program.

\subsubsection{{\L}ojasiewicz Subgradient Inequality}

\begin{theorem}[{\L}ojasiewicz subgradient inequality, \cite{bolte2007lojasiewicz}]\label{thm:Loja-ineq}
	Let $f:\R^n\to (-\infty,\infty]$ be a subanalytic function with closed domain, and assume that $f$ is continuous on its domain. Let $x^\star \in \R^n$ be a limiting stationary point of $f$ (i.e., $0 \in \partial^L f(x^\star)$). Then there exist an exponent $\theta \in [0,1)$, a constant $M>0$, and a neighbourhood $\mathcal{V}$ of $x^\star$ such that 
	\begin{equation}\label{eq:Loja-ineq}
		|f(x)-f(x^\star)|^{\theta} \leq M \|y\|, \quad \forall x \in \mathcal{V},~ y \in \partial^L f(x),
	\end{equation}
	with the convention $0^0=1$.
\end{theorem}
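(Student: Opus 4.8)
The cited result is the Bolte--Daniilidis--Lewis theorem, whose proof runs through subanalytic geometry; here is the route I would take. First, replacing $f$ by $f-f(x^*)$ (still subanalytic, with closed domain, continuous on its domain, and with $0\in\partial^L f(x^*)$), I may assume $f(x^*)=0$. Introduce the \emph{nonsmooth slope} $s(x):=\dist(0,\partial^L f(x))=\inf\{\|y\|:y\in\partial^L f(x)\}\in[0,+\infty]$, with $s(x)=+\infty$ when $\partial^L f(x)=\emptyset$. Since $\|y\|\ge s(x)$ for every $y\in\partial^L f(x)$ and the target inequality is vacuous wherever $\partial^L f(x)=\emptyset$, it suffices to produce an exponent $\theta\in[0,1)$, a constant $M>0$, and a neighbourhood $\mathcal V$ of $x^*$ with $|f(x)|^\theta\le M\,s(x)$ on $\mathcal V$.

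The first genuine step is to show that $s$ is subanalytic near $x^*$, which reduces to showing that the graph $G:=\{(x,y):y\in\partial^L f(x)\}\subset\R^n\times\R^n$ is subanalytic. The graph of the regular subdifferential, $\widehat\partial f(x)=\{y:\liminf_{x'\to x}(f(x')-f(x)-\langle y,x'-x\rangle)/\|x'-x\|\ge 0\}$, is cut out by a first-order formula with bounded quantifiers over the subanalytic data $f$ and $\dom f$, hence is subanalytic by the stability of the subanalytic class under finite Boolean operations, proper projections, and complements of proper-analytic images (Gabrielov's theorem); and $\partial^L f$ is the sequential outer limit (a closure) of $\widehat\partial f$, which again preserves subanalyticity. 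Then $s$ is the optimal-value function $x\mapsto\inf\{\|y\|:(x,y)\in G\}$: in $\{(x,t):s(x)<t\}$ the quantifier over $y$ may be restricted to $\|y\|\le t$, making the relevant projection proper on any bounded range of $t$, so $s$ is subanalytic near $x^*$. Also $f$ is subanalytic and continuous on its closed domain by hypothesis.

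With $f$ and $s$ both subanalytic, the heart of the matter is a talweg / curve-selection argument in the spirit of {\L}ojasiewicz and Kurdyka. On a small compact ball $K=\bar B(x^*,r)$ and for small $\eta>0$, select on each level set $\{f=t\}\cap K$ (for $0<t<\eta$) a point of least slope; the curve selection lemma together with subanalyticity organises these ``valley points'' into a subanalytic arc $t\mapsto\gamma(t)$ with $\gamma(t)\to x^*$ as $t\downarrow 0$. Along $\gamma$, the map $t\mapsto f(\gamma(t))$ is one-variable subanalytic --- real-analytic, strictly monotone for small $t$, with a Puiseux expansion --- and a stratified nonsmooth chain rule (stratify $\mathrm{gph}\,f$ so that $f$ restricts to a $\mathcal C^1$ function stratum by stratum, and note that along a stratum $\partial^L f$ contains the gradient of the restriction) yields $|\tfrac{d}{dt}f(\gamma(t))|\le s(\gamma(t))\,\|\dot\gamma(t)\|$. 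Integrating this against the arclength of $\gamma$ and comparing Puiseux orders produces a $\mathcal C^1$ increasing concave $\varphi$ with $\varphi(0)=0$ and $\varphi'(f(x))\,s(x)\ge 1$ near $x^*$ (the Kurdyka--{\L}ojasiewicz inequality); subanalyticity of $\varphi$ then forces $\varphi'(t)\ge c\,t^{\theta-1}$ near $0$ for some $\theta\in(0,1)$, which rearranges to $|f(x)|^\theta\le M\,s(x)$ on a neighbourhood, i.e.\ to \eqref{eq:Loja-ineq}.

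The main obstacles are the two geometric inputs just used: that $G=\mathrm{gph}\,\partial^L f$ is genuinely subanalytic, and the stratified nonsmooth chain rule bounding $\tfrac{d}{dt}f(\gamma(t))$ by $s(\gamma(t))\,\|\dot\gamma(t)\|$; these carry essentially all of the technical weight, the remainder being Puiseux bookkeeping and shrinking $\mathcal V$ to absorb constants. The degenerate cases are harmless: where $\partial^L f(x)=\emptyset$ (in particular off $\dom f$) the inequality is vacuous; at $x=x^*$ it holds since $0\in\partial^L f(x^*)$ makes $s(x^*)=0=|f(x^*)|$; and the convention $0^0=1$ is merely a formality for the boundary exponent, playing no essential role once $x^*$ is limiting-stationary.
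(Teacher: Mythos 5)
This theorem is not proved in the paper at all: it is imported verbatim from Bolte--Daniilidis--Lewis \cite{bolte2007lojasiewicz} (their Theorem 3.1), so there is no internal proof to compare your argument against. Judged on its own, your outline follows the standard published route: reduce to $f(x^*)=0$, work with the slope $\dist(0,\partial^L f(\cdot))$, establish subanalyticity of the graph of $\partial^L f$ (via stability of the subanalytic class under Boolean operations and \emph{proper} projections, which is why your restriction of the quantifier over $y$ to bounded ranges is not a cosmetic remark but a necessary one), then run a talweg/curve-selection argument with a chain rule along a subanalytic valley curve and a one-variable {\L}ojasiewicz growth lemma to force the power-type desingularization $\varphi(t)=ct^{1-\theta}$. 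That is essentially the Bolte--Daniilidis--Lewis proof, so the approach is sound; but be aware that the two pillars you yourself flag --- subanalyticity of $\mathrm{gph}\,\partial^L f$ and the stratified nonsmooth chain rule bounding $\frac{d}{dt}f(\gamma(t))$ by the slope times $\|\dot\gamma(t)\|$ --- are precisely where the cited paper does its real work (projection formulas for the Fr\'echet subdifferential on strata, Whitney stratification of the graph, and a careful passage to the limiting subdifferential), so as written your text is an accurate roadmap rather than a self-contained proof. One small additional point: the statement as quoted allows $\theta=0$ with the convention $0^0=1$ (the case where $f$ is locally constant in value on the relevant set, or where the inequality is achieved trivially off the critical level), and your final rearrangement should note that the exponent produced by the Puiseux comparison can be taken in $[0,1)$, not just $(0,1)$, to match the statement exactly.
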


The notions of subanalytic sets and functions are classical; see \cite{lojasiewicz1965ensembles,lojasiewicz1993geometrie,bierstone1988semianalytic,bolte2007lojasiewicz}. This class includes all analytic functions and enjoys rich stability properties (closure under finite unions, intersections, complements, and projections). Additionally, the distance function to a subanalytic set is subanalytic, and the sum or difference of continuous and subanalytic functions is subanalytic. Importantly, the {\L}ojasiewicz inequality provides a quantitative description of the geometry of $f$ near stationary points.

The symbol $\partial^L f(x)$ denotes the \emph{limiting subdifferential}, defined via the Fr\'echet subdifferential. We recall the relevant definitions below.

\begin{definition}[Fr\'echet subdifferential]\label{def:F-subdiff}
	For a proper closed function $f:\R^n\to (-\infty,\infty]$ and $x \in \dom f$, the Fr\'echet subdifferential is
	\[
	\partial^F f(x) := \left\{ y \in \R^n : \liminf_{\substack{z \to x \\ z \neq x}} \frac{f(z)-f(x)-\langle y,z-x\rangle}{\|z-x\|} \geq 0 \right\}.
	\]
	If $x \notin \dom f$, then $\partial^F f(x)=\emptyset$.
\end{definition}

\begin{definition}[Limiting subdifferential]\label{def:l-subdiff}
	For a proper closed function $f:\R^n\to (-\infty,\infty]$ and $x\in \R^n$, the limiting subdifferential is
	\[
	\partial^L f(x) := \{ y \in \R^n : \exists\, (x^k \to x, f(x^k)\to f(x), y^k \in \partial^F f(x^k)) \text{ with } y^k \to y\}.
	\]
\end{definition}

\begin{proposition}
	Let $f:\R^n\to (-\infty,\infty]$ be a proper closed function and $x\in\R^n$. Then:
	\begin{itemize}
		\item $\partial^F f(x) \subset \partial^L f(x)$; $\partial^F f(x)$ is closed and convex, while $\partial^L f(x)$ is closed but not necessarily convex.
		\item $\dom \partial^F f$ and $\dom \partial^L f$ are dense in $\dom f$ \cite{bolte2007lojasiewicz}.
		\item If $f$ is differentiable at $x$, then $\partial^F f(x)=\{\nabla f(x)\}$ and $\nabla f(x) \in \partial^L f(x)$. 
		\item If $f$ is $C^1$, then $\partial^F f(x)=\partial^L f(x)=\{\nabla f(x)\}$ for all $x$.
		\item If $f$ is convex and $x \in \operatorname{ri}(\dom f)$, then $\partial f(x)=\partial^F f(x)=\partial^L f(x)$, where $\partial f(x)$ is the convex subdifferential, i.e., $\partial f(x) := \{y \in \R^n : f(z) \geq f(x) + \langle y, z - x \rangle, \forall z \in \R^n\}$.
		\item $\nabla f(x)$ may not be unique in $\partial^L f(x)$ even for differentiable $f$. For instance, $f(x)=x^2\sin(1/x)$ for $x\neq 0$ and $f(0)=0$ is differentiable everywhere with $\partial^F f(0)=\{0\} \subset \partial^L f(0)=[-1,1]$.
	\end{itemize}
\end{proposition}

\begin{proposition}[Summation rules]
	Let $g,h:\R^n\to (-\infty,\infty]$ be proper closed functions and $x\in\R^n$. Then:
	\begin{itemize}
		\item If $h$ is $C^1$ at $x$ and $g$ is finite at $x$, then
		\[
		\partial^F(g+h)(x) = \partial^F g(x)+\nabla h(x), \qquad \partial^L(g+h)(x)=\partial^L g(x)+\nabla h(x).
		\]
		\item If $g,h \in \Gamma_0(\R^n)$ and $h$ is continuous at $x$, then
		\[
		\partial^F(g-h)(x) \subset \partial^L(g-h)(x) \subset \partial g(x)-\partial h(x).
		\]
		In particular:
		\begin{itemize}
			\item If $h$ is differentiable at $x$, then $$\partial^F(g-h)(x)=\partial^L(g-h)(x)=\partial g(x)-\nabla h(x).$$
			\item If $g$ is differentiable at $x$, then 
			\[
			\partial^L(g-h)(x)=\nabla g(x)+\partial^L(-h)(x) \subset \nabla g(x)-\partial h(x).
			\]
			The inclusion may be strict. For instance, with $h(x)=|x|$, we have $\partial^L(-h)(0)=\{-1,1\}\subset [-1,1]=-\partial h(0)$.
		\end{itemize}
	\end{itemize}
\end{proposition}
Background on variational analysis and subdifferential calculus can be found in Rockafellar and Wets~\cite{rockafellar2009variational} and Mordukhovich~\cite{mordukhovich2006variational}.

\subsubsection[Convergence for the Standard DC Program]{Convergence of $\{x^k\}$ for Standard DC Program}

\begin{theorem}\label{thm:globalcv-Lineq}
	Consider the standard DC program \eqref{prob:P} under \AssumpA. 
	Let $\{x^k\}$ be the DCA sequence starting from $x^0\in\dom\partial h$, and set $y^k\in\partial h(x^k)$ for each $k$. 
	Assume that $\{x^k\}$ (hence $\{y^k\}$) is well-defined and bounded. 
	Suppose further that:
	\begin{itemize}
		\item $f$ is continuous on $\dom f$;
		\item $\Phi:=f$ satisfies the \L{}ojasiewicz subgradient (KL) inequality at every cluster point of $\{x^k\}$;
		\item $\rho_g+\rho_h>0$, where $\rho_g$ and $\rho_h$ are the strong convexity moduli of $g$ and $h$, respectively;
		\item $h$ has a locally Lipschitz continuous gradient on a neighborhood of $\{x^k\}$ (so $y^k=\nabla h(x^k)$).
	\end{itemize}
	Then the whole sequence $\{x^k\}$ converges.
\end{theorem}

\begin{proof}
	If there exists $j\ge 0$ such that $x^{j+1}=x^j$, then $\{x^k\}$ converges in finitely many steps to $x^j$. 
	Otherwise, when $x^{k+1}\ne x^k$ for all $k$, we invoke Theorem~\ref{thm:generalcvx}. 
	By Lemma~\ref{lem:H1-H3_P}, the assumptions \Hone--\Hthree\ required by Theorem~\ref{thm:generalcvx} hold under the above conditions (boundedness of the iterates, $\rho_g+\rho_h>0$, local Lipschitz continuity of $\nabla h$, and the KL property of $\Phi$ at cluster points). 
	Hence $\{x^k\}$ is a Cauchy sequence and thus convergent.
\end{proof}

\begin{lemma}\label{lem:H1-H3_P}
	Under the assumptions of Theorem~\ref{thm:globalcv-Lineq}, if the sequence $\{x^k\}$ does not terminate in finitely many iterations (i.e., $x^{k+1}\neq x^k$ for all $k\in\mathbb N$), then the assumptions \Hone--\Hthree\ required in Theorem~\ref{thm:generalcvx} hold.
\end{lemma}

\begin{proof}
	\textbf{\Hone.}
	Let $\omega(x^0)$ be the set of cluster points of $\{x^k\}$. Since $\{x^k\}$ is bounded, $\omega(x^0)$ is nonempty, bounded, and closed, hence compact. 
	By Lemma~\ref{lemma:nonincreasingoffxk}, $\{f(x^k)\}$ is nonincreasing and convergent; therefore every cluster subsequence has the same limit, say $f^\star$. 
	By continuity of $f$ on $\dom f$, $f$ is constant on $\omega(x^0)$ with value $f^\star$. 
	Define the Lyapunov function $$\Psi(x):=\Phi(x)-f^\star=f(x)-f^\star.$$ Then
	\begin{equation}\label{eq:psi-nonneg-limit}
		\Psi(x^k)=f(x^k)-f^\star\ge 0,\qquad 
		\lim_{k\to\infty}\Psi(x^k)=\Psi(\omega(x^0))=0.
	\end{equation}
	Thus \Hone\ holds.
	
	\medskip
	\textbf{\Htwo.}
	By Lemma~\ref{lemma:suffdescent&squaresummable} (sufficient descent),
	\begin{equation}\label{eq:suff-descent}
		\Psi(x^k)-\Psi(x^{k+1}) \;=\; f(x^k)-f(x^{k+1})
		\;\ge\; \frac{\rho_g+\rho_h}{2}\,\|x^{k+1}-x^k\|^2, \forall k\ge 0.
	\end{equation}
	Hence \Htwo\ holds with $D=(\rho_g+\rho_h)/2>0$.
	
	\medskip
	\textbf{\Hthree.}
	First, by Theorem~\ref{thm:subseqconv}, any $x^\star\in\omega(x^0)$ is a DC critical point of $f=g-h$. 
	Since $h$ has a locally Lipschitz continuous gradient on a neighborhood of the trajectory, for $\Psi=f-f^\star$ we have the exact limiting-subdifferential identity
	\begin{equation}\label{eq:subdiff-identity}
		\partial^L \Psi(x) \;=\; \partial^L (g-h)(x) \;=\; \partial g(x)-\nabla h(x)\qquad \text{for all }x \text{ near }\omega(x^0).
	\end{equation}
	By assumption, $\Phi=f$ satisfies the \L{}ojasiewicz subgradient inequality at every cluster point. 
	By Theorem~\ref{thm:Loja-ineq}, for each $z\in\omega(x^0)$ there exist 
	$\theta_z\in[0,1)$, $M_z>0$, and a neighborhood $\mathcal V_z$ of $z$ such that
	$|\Psi(x)|^{\theta_z} \le M_z \|y\|$ for all $x\in\mathcal V_z$ and $y\in\partial^L\Psi(x)$.
	Since $\omega(x^0)$ is compact, pick a finite subcover $\{\mathcal V_{z_i}\}_{i=1}^p$.
	By continuity and $\Psi(\omega(x^0))=0$, shrink the neighborhoods if needed so that
	$|\Psi(x)|<1$ on $\bigcup_{i=1}^p \mathcal V_{z_i}$. 
	Moreover, choose $\epsilon>0$ such that $B(z_i,\epsilon)\subset \mathcal V_{z_i}$ for all $i$.
	Set $\theta:=\max_i\theta_{z_i}$ and $M:=\max_i M_{z_i}$. 
	Then for every $x\in\bigcup_{i=1}^p B(z_i,\epsilon)$ and $y\in\partial^L\Psi(x)$,
	since $0\le|\Psi(x)|<1$ we have $|\Psi(x)|^{\theta}\le|\Psi(x)|^{\theta_{z_i}}$, hence
	\[
	|\Psi(x)|^{\theta} \le M \|y\|.
	\]
	As $\mathrm{dist}(x^k,\omega(x^0))\to 0$ and $\bigcup_{i=1}^p B(z_i,\epsilon)$ is an open neighborhood of $\omega(x^0)$, there exists $N_0$ such that $x^k\in\bigcup_{i=1}^p B(z_i,\epsilon)$ for all $k\ge N_0$. Therefore,
	\begin{equation}\label{eq:L-ineqxk}
		|\Psi(x^k)|^{\theta} \le M \|y\|,\qquad \forall k\ge N_0,\ \forall y\in\partial^L\Psi(x^k).
	\end{equation}
	Next we link $\partial^L\Psi(x^{k+1})$ to the DCA step. 
	The first-order optimality condition for the convex subproblem defining $x^{k+1}$ gives
	\[
	\nabla h(x^k)\in \partial g(x^{k+1}).
	\]
	Therefore, by \eqref{eq:subdiff-identity},
	\begin{equation}\label{eq:elementinpartialpsiwithh}
		\nabla h(x^k)-\nabla h(x^{k+1}) \;\in\; \partial g(x^{k+1})-\nabla h(x^{k+1})
		\;=\; \partial^L\Psi(x^{k+1}).
	\end{equation}
	Since $\{x^k\}\cup \omega(x^0)$ is contained in some bounded open set $\mathcal D$ and $\nabla h$ is locally Lipschitz, there exists a uniform $L>0$ such that
	\begin{equation}\label{eq:unif-L}
		\|\nabla h(x)-\nabla h(z)\|\;\le\; L\,\|x-z\|\qquad \forall\, x,z\in\overline{\mathcal D}.
	\end{equation}
	Combining \eqref{eq:L-ineqxk}–\eqref{eq:elementinpartialpsiwithh}–\eqref{eq:unif-L}, there exists $N\ge N_0+1$ such that for all $k\ge N$,
	\begin{equation}\label{eq:KL-bound-diff}
		|\Psi(x^{k})|^\theta 
		\;\le\; M\, \bigl\|\nabla h(x^{k-1})-\nabla h(x^{k})\bigr\|
		\;\le\; ML\,\|x^{k}-x^{k-1}\|.
	\end{equation}
	Finally, let $\varphi(t)=t^{1-\theta}$ for $t\ge 0$. Then $\varphi$ is $C^1$, concave, $\varphi(0)=0$, and $\varphi'(t)=(1-\theta)t^{-\theta}>0$ for $t>0$. 
	By concavity and monotonicity of $\Psi(x^k)$,
	\begin{equation}\label{eq:concavity-step}
		\varphi(\Psi(x^k))-\varphi(\Psi(x^{k+1}))
		\;\ge\; \varphi'(\Psi(x^k))\bigl(\Psi(x^k)-\Psi(x^{k+1})\bigr).
	\end{equation}
	Using \eqref{eq:suff-descent} and \eqref{eq:KL-bound-diff}, for all $k\ge N$ we get
	\[
	\varphi'(\Psi(x^k))
	\;=\; (1-\theta)\Psi(x^k)^{-\theta}
	\;\ge\; \frac{1-\theta}{ML}\,\frac{1}{\|x^{k}-x^{k-1}\|}.
	\]
	Therefore,
	\[
	\varphi'(\Psi(x^k)) \times \frac{ML}{1-\theta}\,\|x^{k}-x^{k-1}\|\;\ge\; 1,\qquad \forall k\ge N,
	\]
	which establishes \Hthree\ with $C_1=0$ and $C_2=ML/(1-\theta)$.
\end{proof}

\paragraph*{Discussion}
\begin{itemize}
	\item If we carefully choose $y^k \in \partial h(x^k)$ in DCA so that
	\begin{equation}
		\label{eq:betteryk}
		\boxed{-y^k \in \partial^L(-h)(x^k)\subset -\partial h(x^k)}
	\end{equation}
	whenever $h$ is non-differentiable at $x^k$, then we can achieve a similar result in \Hthree\ by assuming that ``g has a locally Lipschitz continuous gradient.'' Note that at non-differentiable points, $\partial^{L}(-h)(x^k)$ is typically a proper subset of $-\,\partial h(x^k)$; choosing $-y^k\in\partial^{L}(-h)(x^k)$ avoids interior selections (e.g., $0$ in the $\ell_1$ case) that may fail to be limiting. This case is particularly interesting in practice, as it allows for the non-differentiability of $h$ at some points $x^k$. The proof of \Hthree\ in this scenario proceeds as follows:
	
	If $g$ has a locally Lipschitz continuous gradient over the bounded open set $\setD$ (containing the sequence $\{x^k\}$), then
	\begin{equation}\label{eq:L-smoothofg}
		\exists L > 0, \forall k \geq 1, \|\nabla g(x^k) - \nabla g(x^{k+1})\| \leq L \|x^k - x^{k+1}\|.
	\end{equation}
	The first-order optimality condition for the convex subproblem \eqref{prob:Pk} gives us 
	$$ -\nabla g(x^{k+1}) = -y^k \in \partial^L (-h)(x^k),$$
	which implies 
	$$\nabla g(x^k) - \nabla g(x^{k+1}) = \nabla g(x^k) - y^k \in \nabla g(x^k) + \partial^L (-h)(x^k) = \partial^L \Psi(x^k).$$
	Therefore,
	\begin{equation}
		\label{eq:elementinpartialpsi}
		\forall k \geq 1, \nabla g(x^k) - \nabla g(x^{k+1}) \in \partial^L \Psi(x^k).
	\end{equation}
	From \eqref{eq:L-ineqxk} and \eqref{eq:elementinpartialpsi}, we obtain
	$$\exists (N > 0, M > 0, \theta \in [0,1)), \forall k \geq N, |\Psi(x^k)|^{\theta} \leq M \|\nabla g(x^k) - \nabla g(x^{k+1})\|.$$
	Combining this with \eqref{eq:L-smoothofg}, we have
	\begin{equation}
		\label{eq:L-ineqatxkwithg}
		\boxed{\exists (N > 0, M > 0, \theta \in [0,1), L > 0), \forall k \geq N, |\Psi(x^k)|^{\theta} \leq ML \|x^k - x^{k+1}\|.}
	\end{equation}
	Hence, for all $k \geq N$, we get
	$$\varphi'(\Psi(x^k)) = (1 - \theta)\Psi(x^k)^{-\theta} \overset{\eqref{eq:L-ineqatxkwithg}}{\geq} \frac{1 - \theta}{ML\|x^k - x^{k+1}\|},$$
	which implies 
	$$\boxed{\varphi'(\Psi(x^k)) \times \frac{ML}{1 - \theta} \|x^k - x^{k+1}\| \geq 1,}$$
	satisfying \Hthree\ with $C_1 = ML/(1 - \theta)$ and $C_2 = 0$.
	
	\item If, for all $x^k$ ($k \geq 1$), either $g$ or $h$ has a locally Lipschitz continuous gradient around $x^k$, then by combining \eqref{eq:elementinpartialpsiwithh} and \eqref{eq:elementinpartialpsi}, we have $\exists (N > 0, M > 0, \theta \in [0,1), L > 0)$ such that for all $k \geq N + 1$,
	$$|\Psi(x^k)|^{\theta} \leq ML (\|x^k - x^{k+1}\| + \|x^{k-1} - x^k\|).$$
	Hence, for all $k \geq N + 1$,
	$$\varphi'(\Psi(x^k)) = (1 - \theta)\Psi(x^k)^{-\theta} \geq \frac{1 - \theta}{ML(\|x^k - x^{k+1}\| + \|x^{k-1} - x^k\|)},$$
	which implies 
	$$\boxed{\varphi'(\Psi(x^k)) \times \frac{ML}{1 - \theta} (\|x^k - x^{k+1}\| + \|x^{k-1} - x^k\|) \geq 1,}$$
	verifying \Hthree\ with $C_1 = C_2 = ML/(1 - \theta)$.
\end{itemize}

\subsubsection[Convergence for the Convex-Constrained DC Program]{Convergence of $\{x^k\}$ for Convex Constrained DC Program}
\begin{theorem}\label{thm:globalcv-Pc}
	Consider the convex constrained DC program \eqref{prob:P} under \AssumpA. Let $\{x^k\}$ and $\{y^k\}=\{\nabla h(x^k)\}$ be two well-defined and bounded sequences generated by DCA starting from an initial point $x^0 \in \dom \partial h$ for problem \eqref{prob:P}. Moreover, suppose that:
	\begin{itemize}
		\item $f$ is continuous over $\dom f$;
		\item $\Phi(x) := f(x) + \chi_{\setC}(x)$ satisfies the {\L}ojasiewicz subgradient inequality at any cluster point of $\{x^k\}$;
		\item $\rho_g + \rho_h > 0$;
		\item $h$ has a locally Lipschitz continuous gradient over $\setC$.
	\end{itemize}
	Then the sequence $\{x^k\}$ is convergent.
\end{theorem}

Due to the equivalent standard DC formulation of \eqref{prob:P} by introducing the indicator function $\chi_{\setC}$, the proof of Theorem \ref{thm:globalcv-Pc} follows the same reasoning as in Theorem \ref{thm:globalcv-Lineq}. We only need to verify the assumptions \Hone-\Hthree\ required in Theorem \ref{thm:generalcvx}, which will be discussed in Lemma \ref{lem:H1-H3_Pc}.

\begin{lemma}\label{lem:H1-H3_Pc}
	Under the assumptions of Theorem \ref{thm:globalcv-Pc}, if the sequence $\{x^k\}$ does not converge in finitely many iterations, then the assumptions \Hone-\Hthree\ required in Theorem \ref{thm:generalcvx} hold.
\end{lemma}

\begin{proof}
	Let $\Psi(x):=\Phi(x) - f^\star$. \Hone\ and \Htwo\ can be verified exactly as in Lemma \ref{lem:H1-H3_P}. Here, we will only highlight the differences in the proof of \Hthree. We first derive the formulas \eqref{eq:L-ineqxk} and \eqref{eq:concavity-step}, and show that there exists a bounded open set $\setD \subset \setC$ containing the sequence $\{x^k\}$ and the bounded set $\omega(x^0)$, over which $h$ has a locally Lipschitz continuous gradient, i.e.,   
	\begin{equation}\label{eq:L-smoothofh-bis}
		\exists L > 0, \forall k \geq 1, \|\nabla h(x^k) - \nabla h(x^{k+1})\| \leq L \|x^k - x^{k+1}\|.
	\end{equation}
	The first-order optimality condition for the convex subproblem \eqref{prob:Pk} (under constraint $\setC$) gives 
	$$ \nabla h(x^k) \in \partial (g + \chi_{\setC})(x^{k+1}).$$
	Thus,
	$$\nabla h(x^k) - \nabla h(x^{k+1}) \in \partial (g + \chi_{\setC})(x^{k+1}) - \nabla h(x^{k+1}) = \partial^L \Psi(x^{k+1}).$$
	Hence,
	\begin{equation}
		\label{eq:elementinpartialpsiwithh-bis}
		\forall k \geq 1, \nabla h(x^k) - \nabla h(x^{k+1}) \in \partial^L \Psi(x^{k+1}).
	\end{equation}
	From this, we again obtain for each $k \geq N + 1$ that
	$$\varphi'(\Psi(x^k)) \times \frac{ML}{1-\theta} \|x^{k-1} - x^k\| \geq 1,$$
	which satisfies \Hthree\ with $C_1 = 0$ and $C_2 = ML/(1-\theta)$.
\end{proof}

\begin{remark}
	Additional assumptions are required to guarantee the global convergence of $\{x^k\}$ when $g$ has a locally Lipschitz continuous gradient in the convex constrained DC program. Specifically, it is necessary to choose $y^k \in \partial h(x^k)$ and $x^{k+1} \in \argmin\{g(x) - \langle y^k, x \rangle : x \in \setC\}$ such that $-\nabla g(x^{k+1}) \in \partial^L (\chi_{\setC} - h)(x^k)$. However, verifying this condition in practice can be challenging.
\end{remark}

\subsection{Convergence of DCA under the KL Property}\label{sec:cvofdcawithKL}
In this section, we focus on the global convergence of the sequence $\{x^k\}$ generated by DCA, using Theorem \ref{thm:generalcvx} under the Kurdyka--{\L}ojasiewicz property. We begin by recalling the Kurdyka--{\L}ojasiewicz property, and then establish the convergence of $\{x^k\}$.

\subsubsection{Kurdyka--{\L}ojasiewicz Property}\label{subsec:KL}
\begin{definition}[Kurdyka--{\L}ojasiewicz Property]\label{thm:KL-ineq}
	Let $f:\R^n\to \R$ be a locally Lipschitz function. We say that $f$ satisfies the \emph{KL property} at $x^\star \in \R^n$ if there exist $\eta \in (0,\infty]$, a neighborhood $\mathcal{V}$ around $x^\star$, and a concave function $\varphi:[0,\eta)\to \R_+$ such that:
	\begin{itemize}
		\item $\varphi(0) = 0$ and $\varphi \in \mathcal{C}^1((0,\eta),\R_+)$;
		\item $\varphi' > 0$ on $(0,\eta)$;
		\item For all $x \in \mathcal{V}$ with $f(x) - f(x^\star) \in (0,\eta)$, we have the KL property:
		\begin{equation}
			\label{eq:KL-ineq-def}
			\varphi'(f(x) - f(x^\star)) \times \text{dist}(0,\partial^L f(x)) \geq 1.
		\end{equation}
	\end{itemize}
	We say that $f$ is \emph{a KL function} if it satisfies the KL inequality over $\dom \partial^L f$.
\end{definition} 

The KL property was originally developed by {\L}ojasiewicz \cite{lojasiewicz1963propriete} (cf. {\L}ojasiewicz inequality) for differentiable subanalytic functions and was later generalized by Kurdyka \cite{kurdyka1998gradients} to definable (cf. tame \cite{van1998tame}) functions. This concept was further extended to the nonsmooth setting by Bolte et al. \cite{bolte2007lojasiewicz,attouch2013convergence}, where the gradient is replaced by the limiting subdifferential. 

A remarkable aspect of KL functions is their ubiquity in applications; for example, semialgebraic, subanalytic, and log-exp functions are KL functions (see \cite{kurdyka1998gradients,bolte2007lojasiewicz,attouch2013convergence} and the references therein).

In practice, we often choose $\varphi(t) = M t^{1-\theta}$ for some $M > 0$ and $\theta \in [0,1)$, where $\theta$ is called the {\L}ojasiewicz exponent. Then, for all $x \in \mathcal{V}$ with $f(x) - f(x^\star) \in (0,\eta)$, we have:
$$(f(x) - f(x^\star))^{\theta} \leq M(1-\theta) \text{dist}(0,\partial^L f(x)),$$
which corresponds exactly to the {\L}ojasiewicz subgradient inequality (given in Theorem \ref{thm:Loja-ineq}) by taking $\text{dist}(0,\partial^L f(x)) := \min\{\|y\| : y \in \partial^L f(x)\}$ and letting $0 \in \partial^L f(x^\star)$. Hence, the {\L}ojasiewicz subgradient inequality can be viewed as a special case of the KL property.

\subsubsection[Convergence of the Iterates under the KL Property]{Convergence of $\{x^k\}$ under the KL Property}
\begin{theorem}\label{thm:globalcv-KL}
	Consider the standard DC program \eqref{prob:P} (or the convex constrained DC program \eqref{prob:P}) under \AssumpA. Let $\{x^k\}$ and $\{y^k\} \subset \{\partial h(x^k)\}$ be two well-defined and bounded sequences generated by DCA, starting from an initial point $x^0 \in \dom \partial h$ for problem \eqref{prob:P} (or \eqref{prob:P}).  
	Moreover, suppose that:
	\begin{itemize}
		\item $f$ is continuous over $\dom f$;
		\item $\Phi(x) := f(x)$ (or $\Phi(x) := f(x) + \chi_{\setC}(x)$) is a KL function;
		\item $\rho_g + \rho_h > 0$;
		\item $h$ has a locally Lipschitz continuous gradient.
	\end{itemize}
	Then, the sequence $\{x^k\}$ is convergent.
\end{theorem}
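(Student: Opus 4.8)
The plan is to reduce Theorem~\ref{thm:globalcv-KL} to the general convergence result Theorem~\ref{thm:generalcvx}, exactly as Theorems~\ref{thm:globalcv-Lineq} and~\ref{thm:globalcv-Pc} were reduced to it through Lemmas~\ref{lem:H1-H3_P} and~\ref{lem:H1-H3_Pc}; the only change is that in the verification of \Hthree\ the {\L}ojasiewicz subgradient inequality at the cluster points is replaced by the \emph{uniformized} KL inequality on a neighbourhood of the cluster set. First, as in the proof of Theorem~\ref{thm:globalcv-Lineq}, if $x^j=x^{j+1}$ for some $j$ then DCA produces an eventually constant, hence convergent, sequence and we are done. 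So assume $x^k\neq x^{k+1}$ for all $k$, let $f^*:=\lim_k f(x^k)$ (which exists by Lemma~\ref{lemma:nonincreasingoffxk}), and set $\Psi:=\Phi-f^*$, with $\Phi=f$ in the standard case and $\Phi=f+\chi_{\setC}$ in the constrained case. It then suffices to check \Hone--\Hthree\ for $\Psi$ along $\{x^k\}$.

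Assumptions \Hone\ and \Htwo\ are established verbatim as in Lemmas~\ref{lem:H1-H3_P} and~\ref{lem:H1-H3_Pc}. Denoting by $\omega(x^0)$ the (nonempty, compact) set of cluster points of $\{x^k\}$, the continuity of $f$ over $\dom f$ together with the convergence of $\{f(x^k)\}$ forces $\Phi\equiv f^*$ on $\omega(x^0)$ --- in the constrained case one uses in addition that $x^k\in\setC$ for $k\ge 1$ and $\setC$ is closed, so $\omega(x^0)\subset\setC$ and $\chi_{\setC}$ vanishes there --- whence $\Psi(x^k)\ge 0$ and $\Psi(x^k)\to 0$, giving \Hone; and \Htwo\ holds with $D=(\rho_g+\rho_h)/2>0$ by Lemma~\ref{lemma:suffdescent&squaresummable}. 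Combining \Htwo\ with $x^k\neq x^{k+1}$ and $\Psi(x^{k+1})\ge 0$ shows moreover that $\Psi(x^k)>0$ for every $k\ge 1$, which is needed to apply the KL inequality.

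The substantive step is \Hthree. Since $\omega(x^0)$ is nonempty and compact and $\Phi$ is a KL function which is constant on $\omega(x^0)$, the uniformized KL property (a finite-cover argument over $\omega(x^0)$, cf.\ \cite{attouch2013convergence}) provides a single $\varepsilon>0$, a single $\eta\in(0,\infty]$ and a single concave $\varphi\in\mathcal{C}^1((0,\eta),\R_+)$ with $\varphi(0)=0$ and $\varphi'>0$ on $(0,\eta)$ such that $\varphi'(\Phi(x)-f^*)\,\dist(0,\partial^L\Phi(x))\ge 1$ for all $x$ with $\dist(x,\omega(x^0))<\varepsilon$ and $\Phi(x)-f^*\in(0,\eta)$. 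For $k$ large enough, $x^k$ lies in the $\varepsilon$-neighbourhood of $\omega(x^0)$ and $0<\Psi(x^k)<\eta$, so the inequality applies at $x^k$. The first-order optimality condition for \eqref{prob:Pk} gives $\nabla h(x^k)\in\partial g(x^{k+1})$ (resp.\ $\nabla h(x^k)\in\partial(g+\chi_{\setC})(x^{k+1})$), hence $\nabla h(x^k)-\nabla h(x^{k+1})\in\partial^L\Phi(x^{k+1})$; local Lipschitz continuity of $\nabla h$ on a bounded open set containing $\{x^k\}\cup\omega(x^0)$ then yields $\dist(0,\partial^L\Phi(x^{k+1}))\le L\|x^k-x^{k+1}\|$. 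Together, $\varphi'(\Psi(x^{k+1}))\,L\,\|x^k-x^{k+1}\|\ge 1$, which after an index shift is precisely \Hthree\ with $C_1=0$ and $C_2=L$ (and $\eta>\Psi(x^k)$ eventually, since $\Psi(x^k)\to 0$). Theorem~\ref{thm:generalcvx} then gives $\sum_k\|x^k-x^{k+1}\|<\infty$, so $\{x^k\}$ is Cauchy and therefore convergent.

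The main obstacle is the passage from the \emph{pointwise} KL property, which Definition~\ref{thm:KL-ineq} guarantees at each individual cluster point, to a \emph{uniform} KL inequality valid on a whole neighbourhood of the compact set $\omega(x^0)$ with one common desingularizing function $\varphi$ and one common $\eta$: this requires a compactness/covering argument together with the preliminary facts that $\Phi$ is constant on $\omega(x^0)$ and that $\Psi(x^k)$ stays strictly positive along the tail (so that the KL inequality is genuinely applicable at every $x^k$ for large $k$). Once this uniformization is in hand, the rest is bookkeeping identical to the {\L}ojasiewicz-inequality case treated in Lemma~\ref{lem:H1-H3_P}.
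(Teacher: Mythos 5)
Your proposal is correct and follows essentially the same route as the paper: reduce to Theorem~\ref{thm:generalcvx} and verify \Hone--\Htwo\ as in Lemma~\ref{lem:H1-H3_P}, then obtain \Hthree\ with $C_1=0$, $C_2=L$ from the KL inequality on a neighbourhood of $\omega(x^0)$ combined with the optimality condition $\nabla h(x^k)\in\partial(g+\chi_{\setC})(x^{k+1})$ and the local Lipschitz continuity of $\nabla h$, which is exactly the paper's Lemma~\ref{lem:H1-H3_P_Pc_KL}. Your treatment is in fact slightly more careful than the paper's, since you explicitly justify the uniformized KL property over the compact cluster set (one common $\varphi$, $\eta$, $\varepsilon$) and the strict positivity of $\Psi(x^k)$ along the tail, both of which the paper uses implicitly without comment.
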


As before, Theorem \ref{thm:globalcv-KL} can be proved by applying Theorem \ref{thm:generalcvx}, with the assumptions \Hone-\Hthree\ being verified in Lemma \ref{lem:H1-H3_P_Pc_KL}.

\begin{lemma}\label{lem:H1-H3_P_Pc_KL}
	Under the assumptions of Theorem \ref{thm:globalcv-KL}, if the sequence $\{x^k\}$ does not converge in finitely many iterations, then the assumptions \Hone-\Hthree\ required in Theorem \ref{thm:generalcvx} hold.
\end{lemma}

\begin{proof}
	\Hone\ and \Htwo\ do not depend on the KL property, so they are proved exactly as in Lemma \ref{lem:H1-H3_P}. Now, we only need to prove \Hthree\ as follows: Let $\setC = \R^n$ for the standard DC program. Then, for both the standard and convex constrained DC programs, $\Psi := \Phi - f^\star$ is a KL function since $\Phi$ is so. This implies that there exist $\eta \in (0,\infty]$, $\epsilon > 0$, a neighborhood $\mathcal{V}$ around $\omega(x^0)$ (defined as $\mathcal{V} := \cup_{z \in \omega(x^0)} B(z,\epsilon)$), and a concave function $\varphi: [0,\eta) \to \R_+$ with  
	$\varphi(0) = 0$, $\varphi \in \mathcal{C}^1((0,\eta),\R_+)$, and $\varphi' > 0$ on $(0,\eta)$, such that for all $x \in \mathcal{V}$ and $\Psi(x) \in (0,\eta)$, we have:
	\begin{equation}
		\label{eq:KL-ineq}
		\varphi'(\Psi(x)) \times \text{dist}(0, \partial^L \Psi(x)) \geq 1.
	\end{equation}
	Since $\omega(x^0)$ is the set of limit points of the sequence $\{x^k\}$, it follows that
	\begin{equation}
		\label{eq:boundednessofxk-KL}
		\exists N > 0, \forall k \geq N, x^k \in \mathcal{V} \text{ and } \Psi(x^k) \in (0,\eta).
	\end{equation}
	Combining \eqref{eq:KL-ineq} and \eqref{eq:boundednessofxk-KL}, we obtain:  
	\begin{equation}
		\label{eq:L-ineqxk-KL}
		\exists N > 0, \forall k \geq N, \varphi'(\Psi(x^k)) \times \text{dist}(0, \partial^L \Psi(x^k)) \geq 1.
	\end{equation}
	Due to the boundedness of the sequence $\{x^k\}$ and the set $\omega(x^0)$, there exists a bounded open set $\setD \subset \setC$ containing the entire sequence $\{x^k\}$ and $\omega(x^0)$. Then, by assumption, $\nabla h$ is locally Lipschitz on $\mathcal D \subset \setC$, hence 
	\begin{equation}\label{eq:L-smoothofh-KL}
		\exists L > 0, \forall k \geq 1, \|\nabla h(x^k) - \nabla h(x^{k+1})\| \leq L \|x^k - x^{k+1}\|.
	\end{equation}
	The first-order optimality condition for the convex subproblem \eqref{prob:Pk} (under the constraint $\setC$) gives: 
	$$ \nabla h(x^k) \in \partial(g + \chi_{\setC})(x^{k+1}).$$
	Thus,
	$$\nabla h(x^k) - \nabla h(x^{k+1}) \in \partial(g + \chi_{\setC})(x^{k+1}) - \nabla h(x^{k+1}) = \partial^L \Psi(x^{k+1}),$$
	which implies the error bound:
	\begin{equation}
		\label{eq:elementinpartialpsiwithh-KL}
		\forall k \geq N+1, \text{dist}(0, \partial^L \Psi(x^k)) \leq \|\nabla h(x^{k-1}) - \nabla h(x^k)\|.
	\end{equation}
	From \eqref{eq:L-ineqxk-KL}, \eqref{eq:L-smoothofh-KL}, and \eqref{eq:elementinpartialpsiwithh-KL}, it follows that:
	\begin{equation}
		\label{eq:L-ineqatxkwithh-KL}
		\boxed{\exists N > 0, \forall k \geq N+1, \varphi'(\Psi(x^k)) \times L \|x^{k-1} - x^k\| \geq 1,}
	\end{equation}
	which satisfies \Hthree\ with $C_1 = 0$ and $C_2 = L$.
\end{proof}

\paragraph*{Discussion} To the case where $g$ is differentiable and $\nabla g$ is locally Lipschitz, we can verify \Hthree\ via the following lemma:

\begin{lemma}\label{lem:iRE}
	Let $\Phi:=f+\chi_{\mathcal C}$ with a DC function $f=g-h$, where $g$ is differentiable, $\nabla g$ is locally Lipschitz on a neighborhood of the trajectory $\{x^k\}$ and $y^k\in\partial h(x^k)$. Assume there exists $B>0$ such that
	$$\|y^{k+1}-y^k\|\le B\,\|x^{k+1}-x^k\|,\quad \text{for all sufficiently large  } k.$$
	Then
	$$\mathrm{dist}\!\big(0,\partial^L\Phi(x^{k+1})\big)\le B\,\|x^{k+1}-x^k\|.$$
	In particular, one may take $-\,y^{k+1}\in\partial^{L}(-h)(x^{k+1})$.
\end{lemma}

\begin{proof}
	Let $\Delta x^k:=x^{k+1}-x^k$. 
	DCA computes $x^{k+1}$ as a minimizer of
	\[
	\min_{x\in\mathcal C}\; g(x)-\langle y^k,x\rangle
	\quad\Longleftrightarrow\quad
	0\in \nabla g(x^{k+1})-y^k+N_{\mathcal C}(x^{k+1}).
	\]
	Hence there exists $u^{k+1}\in N_{\mathcal C}(x^{k+1})$ such that
	\begin{equation}\label{eq:KKT}
		y^k=\nabla g(x^{k+1})+u^{k+1}.
	\end{equation}
	Define
	\[
	v^{k+1}:=\nabla g(x^{k+1})+u^{k+1}-y^{k+1}=y^k-y^{k+1}.
	\]
	Since $y^{k+1}\in\partial h(x^{k+1})$ and $u^{k+1}\in N_{\mathcal C}(x^{k+1})=\partial \chi_{\mathcal C}(x^{k+1})$, and $g$ is smooth, we have
	\[
	v^{k+1}\ \in\ \nabla g(x^{k+1})+N_{\mathcal C}(x^{k+1})-\partial h(x^{k+1}).
	\]
	For the limiting subdifferential one has the standard sum rule
	$\partial^{L}(g+\chi_{\mathcal C})(x)=\nabla g(x)+N_{\mathcal C}(x)$ and
	\[
	\partial^{L}\Phi(x)=\partial^{L}\big((g+\chi_{\mathcal C})-h\big)(x)\ \subset\ \partial(g+\chi_{\mathcal C})(x)-\partial h(x).
	\]
	Thus $v^{k+1}$ at least belongs to $\partial(g+\chi_{\mathcal C})(x^{k+1})-\partial h(x^{k+1})$. 
	If, in addition, one chooses a \emph{limiting} selection for $-h$ (e.g., $-\,y^{k+1}\in\partial^{L}(-h)(x^{k+1})$), then
	\[
	v^{k+1}\in \partial(g+\chi_{\mathcal C})(x^{k+1})+\partial^{L}(-h)(x^{k+1}) = \partial^L \Phi(x^{k+1}).
	\]
	Hence, for all large enough $k$
	\[\mathrm{dist}\!\big(0,\partial^L\Phi(x^{k+1})\big)\le \|v^{k+1}\| = \|y^{k+1}-y^k\|\le B\,\|x^{k+1}-x^k\|.\]
\end{proof}

\subsection{Relation to Classical KL-Based Frameworks}\label{subsec:positioning}

Classical KL-based frameworks for descent methods, typified by Attouch--Bolte--Svaiter \cite{attouch2013convergence}, posit a Lyapunov energy $\Phi$ obeying the KL property at cluster points of $\{x^k\}$ and derive convergence from a \emph{sufficient decrease} (SD) plus a \emph{relative error} (RE) bound, under mild boundedness/continuity assumptions. Before that machinery can be applied to DCA, however, three DC-specific issues must be addressed: well-definedness depends on the chosen decomposition, the residual estimate depends on the subgradient selection, and the natural notion of DC criticality is weaker than standard stationarity. We keep the KL backbone but change the \emph{assumption shape} and \emph{verification route} so that they are native to DCA and easy to port to its variants:

\begin{enumerate}
	\item \textbf{Energy flexibility \Hone.} We work with a general Lyapunov $\Psi$ (not necessarily $f$). This covers proximal/linearized, inertial, and boosted DCA variants where descent holds for a modified energy rather than $f$.
	
	\item \textbf{Descent from the subproblem \Htwo.} The quadratic decrease $\Psi(x^k)-\Psi(x^{k+1})\ge D\|x^{k+1}-x^k\|^2$ is a direct consequence of convex subproblem optimality, or of adding a standard regularizer. Hence $D$ is \emph{read off} from the model, with no appeal to global Lipschitz constants on $f$.
	
	\item \textbf{RE for \Hthree.}
	Instead of committing to the classical distance bound 
	$\mathrm{dist}(0,\partial\Phi(x^{k+1}))\le B\|x^{k+1}-x^k\|$, 
	we allow either of two interchangeable forms that are native to DCA:
	(i) a distance bound obtained by combining the KL inequality with the subproblem optimality (this is the route used in Lemma \ref{lem:H1-H3_P_Pc_KL}); or 
	(ii) a distance bound derived from the monotonicity of $\partial h$ and the local Lipschitz continuity of $\nabla g$ along the trajectory (see Lemma~\ref{lem:iRE}). 
	Both yield \Hthree\ after composing with the KL desingularizing function $\varphi$.

	\item \textbf{Well-definedness is separated.} We explicitly front-load \AssumpA\ and \AssumpB\ (existence/solvability of subproblems) before convergence. This matters in DC: natural decompositions may break DCA, and the fix (tuning the decomposition or adding $\varphi$) is orthogonal to KL.
	
	\item \textbf{Error-tolerance and variants.} The \Htwo-\Hthree\ format is stable under inexact solves and noisy $y^k$ (adding summable errors), and it is \emph{plug-and-play} across DCA variants.
\end{enumerate}

To make the last point concrete, consider a proximal or linearized DCA variant in which the convex subproblem is regularized by a quadratic term. The added term modifies the natural Lyapunov function but immediately strengthens \Htwo, while the optimality condition of the regularized subproblem yields the same type of residual estimate used in Lemma \ref{lem:H1-H3_P_Pc_KL}. This is a typical situation where the abstract KL philosophy remains valid, yet our DCA-specific verification route is what makes the proof reusable.

\subsection{Convergence Rates of DCA}\label{sec:convergencerate}
In this part, we focus on the convergence rates of DCA concerning the sequences $\{f(x^k)\}$ and $\{\|x^k - x^\star\|\}$ for DC programs under the KL property. Similar results will also hold under the {\L}ojasiewicz subgradient inequality, which is a special case of the KL property.

\begin{theorem}[Convergence Rate of $\{f(x^k)\}$]\label{thm:cvrate-DCA-KL-fk}
	Under the assumptions of Theorem \ref{thm:globalcv-KL}, suppose that $f(x^k) \to f^\star$ and that $\Psi$ satisfies the KL property with the concave function $\varphi(t) = M t^{1-\theta}$ for some $M > 0$ and $\theta \in [0,1)$. Then we have:
	\begin{enumerate}
		\item[(i)] If $\theta = 0$, then $\{f(x^k)\}$ converges to $f^\star$ in finitely many iterations.
		\item[(ii)] If $\theta \in (0,\frac{1}{2}]$, then there exists a constant $q \in (0,1)$ such that 
		\begin{equation}
			\label{eq:cvord-linear}
			f(x^k) - f^\star \leq O(q^k), \quad \text{as } k \to \infty;
		\end{equation}
		i.e., the sequence $\{f(x^k)\}$ converges linearly to $f^\star$.
		\item[(iii)] If $\theta \in (\frac{1}{2},1)$, then 
		\begin{equation}
			\label{eq:cvord-sublinear}
			f(x^k) - f^\star \leq O\left(k^{\frac{1}{1-2\theta}}\right), \quad \text{as } k \to \infty;
		\end{equation}
		i.e., the sequence $\{f(x^k)\}$ converges sublinearly to $f^\star$.
	\end{enumerate}
\end{theorem}
\begin{proof}
	By the definition of $\varphi$, we have:
	\begin{equation}
		\label{eq:diffphi-fk}
		\varphi'(\Psi(x^k)) = M(1-\theta)\Psi(x^k)^{-\theta}.
	\end{equation}
	Substituting $\varphi'(\Psi(x^k))$ into \eqref{eq:L-ineqatxkwithh-KL} and squaring both sides, we obtain for sufficiently large $k$:
	$$\Psi(x^k)^{2\theta} \leq M^2L^2(1-\theta)^2\|x^{k-1} - x^k\|^2 \overset{\Htwo}{\leq} \frac{M^2L^2(1-\theta)^2}{D} (\Psi(x^{k-1}) - \Psi(x^k)),$$
	where $D = (\rho_g + \rho_h)/2 > 0$ as given in \eqref{eq:suff-descent}. Now, setting $\alpha = 2\theta > 0$, $\beta = \frac{2M^2L^2(1-\theta)^2}{\rho_g + \rho_h} > 0$, and $r_k = \Psi(x^k)$, we obtain the desired convergence rate by applying Lemma \ref{lem:cvrate-bis}.
\end{proof}

\begin{remark}
	Similar convergence rates can be obtained for the case where $g$ has a locally Lipschitz continuous gradient, using Lemmas \ref{lem:cvrate} and \ref{lem:cvrate-tri}.
\end{remark}

\begin{theorem}[Convergence Rate of $\{\|x^k-x^\star\|\}$]\label{thm:cvrate-DCA-KL-xk}
	Under the assumptions of Theorem \ref{thm:globalcv-KL}, let $x^k \to x^\star$. Then:
	\begin{enumerate}
		\item[(i)] If $\theta = 0$, then the sequence $\{\|x^k - x^\star\|\}$ converges to $0$ in finitely many iterations.
		\item[(ii)] If $\theta \in (0,\frac{1}{2}]$ and $\|x^k - x^\star\| > 0$ for all $k \in \N$, then there exists a constant $q \in (0,1)$ such that 
		$$\|x^k - x^\star\| \leq O(q^k), \quad \text{as } k \to \infty;$$
		i.e., the sequence $\{\|x^k - x^\star\|\}$ converges linearly to $0$.
		\item[(iii)] If $\theta \in (\frac{1}{2},1)$ and $\|x^k - x^\star\| > 0$ for all $k \in \N$, then
		$$\|x^k - x^\star\| \leq O\left(k^{\frac{1-\theta}{1-2\theta}}\right), \quad \text{as } k \to \infty;$$
		i.e., the sequence $\{\|x^k - x^\star\|\}$ converges sublinearly to $0$.
	\end{enumerate}
\end{theorem}

\begin{proof}
	$(i)$ If $\theta=0$, Theorem \ref{thm:cvrate-DCA-KL-fk} implies that the sequence $\{f(x^k)\}$ converges to $f^\star$ in a finite number of iterations (say $T$ iterations). From the sufficient descent property \eqref{eq:suffdecprop}, i.e., 
	$$f(x^k) - f(x^{k+1}) \geq \frac{\rho_g+\rho_h}{2}\|x^k - x^{k+1}\|^2, \quad \forall k\in \N,$$
	we have 
	$$\|x^k - x^{k+1}\| = 0, \quad \forall k \geq T.$$
	Thus, $x^k = x^T$ for all $k \geq T$, implying that $\{x^k\}$ converges to $x^\star$ ($= x^T$) in $T$ iterations. For the case where $\theta \neq 0$, we must have $\|x^k - x^\star\| > 0$ for all $k \in \N$.\\
	$(ii)-(iii)$: Consider the residual $R_t := \sum_{k \geq t}\|x^k - x^{k+1}\|$ for all $t \in \N$. The sequence $\{R_t\}$ is non-negative, non-increasing, and converging to $0$. From 
	\begin{equation}\label{eq:resinSt}
		\|x^t - x^{t+1}\| = R_t - R_{t+1},
	\end{equation}
	and by the triangle inequality, we have
	\begin{equation}
		\label{eq:ineqresvsSt}
		\begin{aligned}
			\|x^t - x^\star\|
			&= \|x^t - x^{t+1} + x^{t+1} - x^{t+2} + \cdots + x^{k-1} - x^\star\| \\
			&\leq \sum_{k \geq t}\|x^k - x^{k+1}\| = R_t, \quad \forall t \in \N.
		\end{aligned}
	\end{equation} 
	Now, recall the relation \eqref{eq:residualrelation}, which states that there exists $N > 0$ such that for all $k \geq N+1$:
	$$\frac{3}{4}\|x^k - x^{k+1}\| \leq \frac{1}{4}\|x^{k-1} - x^k\| + \frac{\max\{C_1,C_2\}}{D} \left(\varphi(\Psi(x^k)) - \varphi(\Psi(x^{k+1}))\right).$$
	Summing for $k$ from $t \geq N+1$ to $\infty$ and using the fact that $\varphi(\Psi(x^k)) \to 0$ as $k \to \infty$, we get
	\begin{equation}\label{eq:rec_St}
		\frac{1}{2} R_t \leq \frac{1}{4}\|x^{t-1} - x^t\| + \frac{\max\{C_1,C_2\}}{D} \varphi(\Psi(x^t)), \quad \forall t \geq N+1.
	\end{equation}
	By the definition of $\varphi(s) = M s^{1-\theta}$ for some $M > 0$ and $\theta \in [0,1)$, we have
	\begin{equation}
		\label{eq:diffphi-xk}
		\varphi'(\Psi(x^k)) = M(1-\theta)\Psi(x^k)^{-\theta}.
	\end{equation}
	Recalling the relation \eqref{eq:L-ineqatxkwithh-KL}, for sufficiently large $k$, we have 
	\begin{equation}\label{eq:KL-ineqatxkwithh}
		\varphi'(\Psi(x^k)) \times L \|x^{k-1} - x^k\| \geq 1.
	\end{equation}
	From \eqref{eq:diffphi-xk} and \eqref{eq:KL-ineqatxkwithh}, we obtain
	\begin{equation}
		\label{eq:ineq_psi(xk)^theta}
		\Psi(x^k)^{\theta} \leq M L (1-\theta) \|x^k - x^{k-1}\|.
	\end{equation}
	Therefore,
	\begin{eqnarray*}
		\varphi(\Psi(x^k)) &=& M \Psi(x^k)^{1-\theta} \\
		&\overset{\eqref{eq:ineq_psi(xk)^theta}}{\leq}& M \left(M (1-\theta) L \|x^k - x^{k-1}\|\right)^{\frac{1-\theta}{\theta}} \\
		&\overset{\eqref{eq:resinSt}}{=}& M \left(M (1-\theta) L\right)^{\frac{1-\theta}{\theta}} (R_{t-1} - R_t)^{\frac{1-\theta}{\theta}}.
	\end{eqnarray*}
	Injecting this inequality and \eqref{eq:resinSt} into \eqref{eq:rec_St}, we obtain for sufficiently large $t$:
	\begin{equation}\label{eq:rec_St_new}
		\frac{1}{2} R_t \leq \frac{1}{4}(R_{t-1} - R_t) + \frac{\max\{C_1,C_2\}}{D}  M \left(M (1-\theta) L\right)^{\frac{1-\theta}{\theta}} (R_{k-1} - R_k)^{\frac{1-\theta}{\theta}}.
	\end{equation}
	Let $a := \frac{\max\{C_1,C_2\}}{D}  M \left(M(1-\theta)L\right)^{\frac{1-\theta}{\theta}}>0$ and $b := \frac{1-\theta}{\theta}$.\\
	$\rhd$ If $\theta\in (0,\frac{1}{2}]$, then $b\geq 1$ and $$(R_{t-1}-R_t)^b = O(R_{t-1}-R_t), \text{ as } t\to \infty.$$
	Thus, from \eqref{eq:rec_St_new}, we have
	$$R_t \leq O(R_{t-1}-R_t), \text{ as } t\to \infty.$$
	Hence, $\exists C > 0$ such that 
	$$R_t \leq C(R_{t-1}-R_t), \text{ as } t\to \infty,$$
	implying that $\exists q:=\frac{C}{1+C} \in (0,1)$ such that
	\begin{equation}\label{eq:caseI_theta<0.5}
		\boxed{R_t \leq O\left(q^t\right), \text{ as } t\to \infty.}
	\end{equation}
	$\rhd$ If $\theta\in(\frac{1}{2},1)$, then $b\in (0,1)$ and 
	$$R_{t-1}-R_t = O((R_{t-1}-R_t)^b), \text{ as } t\to \infty.$$
	From \eqref{eq:rec_St_new}, we have
	$$R_t \leq O((R_{t-1}-R_t)^b), \text{ as } t\to \infty.$$
	Hence, $\exists C > 0$ such that 
	$$R_t^{\frac{1}{b}} \leq C(R_{t-1}-R_t), \text{ as } t\to \infty,$$
	with $\frac{1}{b} > 1$. Using Lemma \ref{lem:cvrate-bis} (iii), we get
	$$R_t\leq O\left(t^{\frac{b}{b-1}}\right) = O\left(t^{\frac{1-\theta}{1-2\theta}}\right), \text{ as } t\to \infty.$$
	Hence
	\begin{equation}\label{eq:caseI_theta>0.5}
		\boxed{R_t\leq O\left(t^{\frac{1-\theta}{1-2\theta}}\right), \text{ as } t\to \infty.}
	\end{equation}
	Combining \eqref{eq:ineqresvsSt} ($\|x^t - x^\star\|\leq  R_t$) with \eqref{eq:caseI_theta<0.5} and \eqref{eq:caseI_theta>0.5}, we get that $\{\|x^k-x^\star\|\}$ converges to $0$ linearly if $\theta\in (0,\frac{1}{2}]$ and sublinearly if $\theta \in (\frac{1}{2},1)$.
\end{proof}

\subsubsection{A PL Specialization and Structure-Dependent Rates}\label{subsec:PLspecialization}

The KL-based analysis above also covers stronger error-bound regimes once the Lyapunov function becomes differentiable near the cluster set. This yields a simple answer to whether the KL-dependent part of the theory extends to the Polyak--{\L}ojasiewicz (PL) setting.

\begin{proposition}[PL implies KL with exponent $1/2$]\label{prop:PL-implies-KL}
	
	Let $\Psi:\R^n\to\R_+$ be of class $C^1$ on a neighborhood $\mathcal U$ of $\omega(x^0)$ and suppose that there exists $\mu>0$ such that
	\begin{equation}\label{eq:PL-condition}
		\frac12 \|\nabla \Psi(x)\|^2 \ge \mu \Psi(x), \quad \forall x\in \mathcal U.
	\end{equation}
	Then $\Psi$ satisfies the KL property on $\mathcal U$ with desingularizing function
	\begin{equation}\label{eq:PL-desing}
		\varphi(s)=\sqrt{\frac{2}{\mu}}\,s^{1/2},
	\end{equation}
	and hence with {\L}ojasiewicz exponent $\theta=\frac12$.
	
\end{proposition}

\begin{proof}
	
	Fix $x\in \mathcal U$ with $\Psi(x)>0$. The PL inequality \eqref{eq:PL-condition} gives
	\[
	\|\nabla \Psi(x)\| \ge \sqrt{2\mu}\,\Psi(x)^{1/2}.
	\]
	Since $\Psi$ is $C^1$ on $\mathcal U$, we have $\partial^L \Psi(x)=\{\nabla \Psi(x)\}$ and therefore
	\[
	\dist(0,\partial^L \Psi(x)) = \|\nabla \Psi(x)\|.
	\]
	For $\varphi$ defined by \eqref{eq:PL-desing}, we have
	\[
	\varphi'(s) = \frac{1}{\sqrt{2\mu}}\,s^{-1/2}\qquad (s>0).
	\]
	Hence
	\[
	\varphi'(\Psi(x))\,\dist(0,\partial^L \Psi(x))
	= \frac{\|\nabla \Psi(x)\|}{\sqrt{2\mu}\,\Psi(x)^{1/2}}
	\ge 1,
	\]
	which is exactly the KL inequality. The exponent is therefore $\theta=\frac12$.
	
\end{proof}

\begin{corollary}[Linear convergence in a PL regime]
	
	Under the assumptions of Theorem \ref{thm:globalcv-KL}, assume in addition that the Lyapunov function $\Psi$ is $C^1$ on a neighborhood of $\omega(x^0)$ and satisfies the PL inequality \eqref{eq:PL-condition}. Then the KL exponent in Theorems \ref{thm:cvrate-DCA-KL-fk} and \ref{thm:cvrate-DCA-KL-xk} can be taken as $\theta=\frac12$. In particular, the objective values and the iterates converge linearly whenever the conclusions of those two theorems apply.
	
\end{corollary}

\begin{proof}
	
	By Proposition \ref{prop:PL-implies-KL}, the PL inequality \eqref{eq:PL-condition} implies that $\Psi$ satisfies the KL property with exponent $\theta=\frac12$. Therefore, Theorems \ref{thm:cvrate-DCA-KL-fk} and \ref{thm:cvrate-DCA-KL-xk} apply with $\theta=\frac12$. Since $\theta=\frac12$ falls into the linear regime in both theorems, we obtain linear convergence of the objective values and of the iterates.
	
\end{proof}

\begin{remark}
	
	This should be interpreted as a specialization rather than an equivalence: in the differentiable setting, a PL inequality is a stronger, typically more quantitative error-bound condition than KL$(1/2)$. The point for the present paper is that it fits seamlessly into the same DCA-specific framework and sharpens the rate statement without any appeal to subanalyticity.
	
\end{remark}

\begin{remark}
	
	Theorems \ref{thm:cvrate-DCA-KL-fk} and \ref{thm:cvrate-DCA-KL-xk} also clarify how sharper rates arise for structured application models. Once the model structure yields an explicit KL exponent or a PL/error-bound constant, the rate follows immediately from the same proof template. In this sense, the framework separates the DCA-specific verification of descent and residual bounds from the model-specific task of quantifying regularity.
	
\end{remark}

\appendix
\section{Useful Lemmas}\label{secA1}

The following lemmas are essential for establishing the convergence rate of DCA in our analysis. Beforehand, we recall some classical convergence regimes for a nonnegative sequence $\{r_k\}$:
\begin{itemize}
	\item If $r_k \le c\,k^{-p}$ with $p>0$ and $c>0$, then $\{r_k\}$ converges sublinearly, and achieving an $\varepsilon$-accurate solution requires $O(\varepsilon^{-1/p})$ iterations.
	\item If $r_k \le c\,q^k$ with $0<q<1$ and $c>0$, then $\{r_k\}$ converges linearly, with iteration complexity $O(\ln(\varepsilon^{-1}))$.
	\item If $r_{k+1} \le c\,r_k^2$ with $0 < c r_0 < 1$, then $\{r_k\}$ converges quadratically, with iteration complexity $O(\ln\ln(\varepsilon^{-1}))$.
\end{itemize}

\begin{lemma}\label{lem:cvrate-bis}
	Let $\{r_k\}$ be a nonincreasing and nonnegative sequence converging to $0$. Suppose there exist two positive constants $\alpha$ and $\beta$ such that for all sufficiently large $k$, we have
	\begin{equation}
		\label{eq:seqrel-bis}
		r_{k+1}^{\alpha} \leq \beta (r_k - r_{k+1}).
	\end{equation}
	Then:
	\begin{enumerate}
		\item[(i)] If $\alpha=0$, the sequence $\{r_k\}$ converges to $0$ in a finite number of steps.
		\item[(ii)] If $\alpha \in (0,1]$ and $r_k > 0$ for all $k \in \N$, then 
		$$r_k \leq O\left(\left(\frac{\beta}{1+\beta}\right)^k\right), \text{ as } k \to \infty;$$
		i.e., the sequence $\{r_k\}$ converges linearly to $0$ with rate $\frac{\beta}{1+\beta}$.
		\item[(iii)] If $\alpha > 1$ and $r_k > 0$ for all $k \in \N$, then 
		$$r_k \leq O\left(k^{\frac{1}{1-\alpha}}\right), \text{ as } k \to \infty;$$
		i.e., the sequence $\{r_k\}$ converges sublinearly to $0$.
	\end{enumerate}
\end{lemma}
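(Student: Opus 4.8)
The plan is to treat each of the three regimes separately, exploiting the recursion $r_{k+1}^{\alpha}\leq\beta(r_k-r_{k+1})$ together with monotone convergence $r_k\downarrow 0$.

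For (i), when $\alpha=0$ the recursion (with the convention that $r_{k+1}^0=1$ whenever $r_{k+1}>0$) gives $1\leq\beta(r_k-r_{k+1})$ for all large $k$, so $r_k-r_{k+1}\geq 1/\beta>0$; since $\{r_k\}$ is nonnegative this cannot persist, hence $r_{k+1}=0$ for some finite $k$, i.e. finite termination. For (ii), with $\alpha\in(0,1]$ and $r_k>0$ note that since $r_k\to 0$, for large $k$ we have $r_{k+1}\leq 1$, hence $r_{k+1}\leq r_{k+1}^{\alpha}\leq\beta(r_k-r_{k+1})$; rearranging yields $(1+\beta)r_{k+1}\leq\beta r_k$, i.e. $r_{k+1}\leq\frac{\beta}{1+\beta}r_k$, and iterating gives the geometric bound $r_k\leq O\bigl((\tfrac{\beta}{1+\beta})^k\bigr)$.

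For (iii), with $\alpha>1$, the idea is the classical integral/telescoping trick. Define $\phi(s)=s^{1-\alpha}$, which is increasing on $(0,\infty)$ since $1-\alpha<0$, and study $\phi(r_{k+1})-\phi(r_k)$. Using $r_k-r_{k+1}\geq\beta^{-1}r_{k+1}^{\alpha}$ and the mean value theorem (or convexity of $\phi$), one obtains a lower bound of the form $\phi(r_{k+1})-\phi(r_k)\geq c$ for some constant $c>0$ depending on $\alpha,\beta$, provided the ratio $r_{k+1}/r_k$ stays bounded — which follows from the recursion showing $r_{k+1}\leq r_k$ and, after a short argument, that $r_{k+1}/r_k$ does not approach $1$ too fast, or alternatively by splitting into the cases $r_{k+1}\geq\tfrac12 r_k$ and $r_{k+1}<\tfrac12 r_k$ and handling each. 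Summing the telescoping inequality from $N$ to $k$ gives $\phi(r_k)\geq\phi(r_N)+c(k-N)$, hence $r_k^{1-\alpha}\geq\Omega(k)$, i.e. $r_k\leq O\bigl(k^{1/(1-\alpha)}\bigr)$.

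The main obstacle is the case $\alpha>1$: one must control $\phi(r_{k+1})-\phi(r_k)$ from below uniformly, and the naive mean value estimate produces a factor involving $r_k^{-\alpha}$ evaluated at an intermediate point, which could degrade if $r_{k+1}$ is much smaller than $r_k$. The standard fix is a case distinction — if $r_{k+1}^{\alpha-1}\geq\tfrac12 r_k^{\alpha-1}$ one gets the clean bound directly, while if $r_{k+1}^{\alpha-1}<\tfrac12 r_k^{\alpha-1}$ then $\phi(r_{k+1})-\phi(r_k)=r_{k+1}^{1-\alpha}-r_k^{1-\alpha}$ is already at least $\phi(r_k)$ times a constant, which combined with $r_k\to 0$ closes the argument. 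I would carry out this dichotomy carefully and then telescope.
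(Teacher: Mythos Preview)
Your proposal is correct and follows essentially the same route as the paper: parts (i) and (ii) are identical to the paper's argument, and for (iii) the paper also works with $r_k^{1-\alpha}$ and performs exactly the dichotomy you describe (the paper phrases it as $\phi(r_{k+1})\le\tau\phi(r_k)$ versus $\phi(r_{k+1})>\tau\phi(r_k)$ with $\phi(t)=t^{-\alpha}$, which is your case split with an arbitrary $\tau>1$ in place of $2$), then telescopes. One small slip: $\phi(s)=s^{1-\alpha}$ is \emph{decreasing} on $(0,\infty)$ when $\alpha>1$, not increasing --- but since $r_k$ is decreasing this still makes $\phi(r_k)$ increasing in $k$, so the telescoping goes through as you intend.
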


\begin{proof}
	(i) If $\alpha=0$, then \eqref{eq:seqrel-bis} implies that for large enough $k$ (i.e., there exists $N>0$, $\forall k\geq N$), we have 
	$$0\leq r_{k+1}\leq r_k - \frac{1}{\beta}.$$
	It follows by $r_k\to 0$ and $\frac{1}{\beta}>0$ that $\{r_k\}$ converges to $0$ in a finite number of steps, and we can estimate the number of steps as:
	$$0\leq r_{k+1} \leq r_{k}-\frac{1}{\beta} \leq r_{k-1}-\frac{2}{\beta}\leq \cdots \leq r_{N}-\frac{k-N+1}{\beta}.$$
	Hence $$k\leq \beta r_N + N - 1.$$
	(ii) If $\alpha \in ]0,1]$ and $r_k>0, \forall k\in \N$. Since $r_k\to 0$, we have that $r_k<1$ for large enough $k$. Thus, $r_{k+1}\leq r_k<1$, and it follows by \eqref{eq:seqrel-bis} that 
	$$r_{k+1} \leq r_{k+1}^{\alpha} \leq \beta (r_k-r_{k+1})$$
	for large enough $k$. Hence there exists $N>0$ such that $\forall k\geq N$
	$$r_{k+1}\leq \left(\frac{\beta}{1+\beta}\right) r_k.$$
	So that 
	$$r_k \leq \left(\frac{\beta}{1+\beta}\right)^{k-N} r_N = O\left((\frac{\beta}{1+\beta})^{k}\right), \text{ as } k\to \infty.$$
	That is, $\{r_k\}$ converges linearly to $0$ with rate $\frac{\beta}{1+\beta}$ for large enough $k$.\\
	(iii) If $\alpha>1$ and $r_k>0$ for all $k\in \N$. Let $\phi(t)=t^{-\alpha}$ and $\tau > 1$.\\
	$\rhd$ Suppose that $\phi(r_{k+1})\leq \tau \phi(r_k)$. By the decreasing of $\phi(t)$ and $r_k\geq r_{k+1}$, we have 
	$$\phi(r_k)(r_k-r_{k+1}) \leq \int_{r_{k+1}}^{r_k} \phi(t)~\mathrm{d} t = \frac{1}{1-\alpha}(r_{k}^{1-\alpha}-r_{k+1}^{1-\alpha}).$$
	It follows from \eqref{eq:seqrel-bis} that 
	$$\frac{1}{\beta} \leq \phi(r_{k+1})(r_k-r_{k+1}) \leq \tau\phi(r_{k})(r_k-r_{k+1})  \leq \frac{\tau}{\alpha-1}(r_{k+1}^{1-\alpha} - r_{k}^{1-\alpha}).$$
	Hence
	\begin{equation}
		\label{eq:recineq01}
		r_{k+1}^{1-\alpha} - r_{k}^{1-\alpha} \geq \frac{\alpha-1}{\beta\tau}.
	\end{equation}
	$\rhd$ Suppose that $\phi(r_{k+1})\geq \tau \phi(r_k)$. Taking $q:=\tau^{-\alpha^{-1}}\in (0,1)$, then
	$$r_{k+1} \leq q r_k.$$
	Hence $$r_{k+1}^{1-\alpha} \geq q^{1-\alpha} r_k^{1-\alpha}.$$
	It follows that $\exists N>0, \forall k\geq N$:
	$$r_{k+1}^{1-\alpha} - r_{k}^{1-\alpha} \geq (q^{1-\alpha}-1) r_k^{1-\alpha} \geq (q^{1-\alpha}-1) r_N^{1-\alpha}.$$
	In both cases, there exists a constant $\zeta := \min \{(q^{1-\alpha}-1) r_N^{1-\alpha}, \frac{\alpha-1}{\beta\tau}\}$ such that for large enough $k$, we have 
	$$r_{k+1}^{1-\alpha} - r_{k}^{1-\alpha} \geq \zeta.$$
	Summing up for $k$ from $N$ to $M-1\geq N$, we have 
	$$r_{M}^{1-\alpha} - r_{N}^{1-\alpha} \geq \zeta (M-N).$$
	Then,
	$$r_{M} \leq \left( r_{N}^{1-\alpha} +\zeta (M-N) \right)^{\frac{1}{1-\alpha}} = O\left(M ^{\frac{1}{1-\alpha}}\right).$$
	Hence, 
	$$r_{k} \leq O\left(k^{\frac{1}{1-\alpha}}\right), \text{ as } k\to \infty,$$
	i.e., there exists some $\eta>0$ such that 
	$$r_{k}\leq \eta k^{\frac{1}{1-\alpha}}$$
	for large enough $k$, which completes the proof.
\end{proof}

A similar result to Lemma \ref{lem:cvrate} for the inequality \( r_{k}^{\alpha} \leq \beta (r_k - r_{k+1}) \) is stated below.

\begin{lemma}\label{lem:cvrate}
	Let $\{r_k\}$ be a nonincreasing and nonnegative sequence converging to $0$. Suppose there exist two positive constants $\alpha$ and $\beta$ such that for all sufficiently large $k$, we have 
	\begin{equation}
		\label{eq:seqrel}
		r_k^{\alpha} \leq \beta (r_k - r_{k+1}).
	\end{equation}
	Then:
	\begin{enumerate}
		\item[(i)] If $\alpha=0$, the sequence $\{r_k\}$ converges to $0$ in finitely many iterations.
		\item[(ii)] If $\alpha \in (0,1]$ and $r_k > 0$ for all $k \in \N$, then 
		$$r_k \leq O\left(\left(1-\frac{1}{\beta}\right)^k\right), \text{ as } k \to \infty;$$
		i.e., the sequence $\{r_k\}$ converges linearly to $0$ with rate \( 1-\frac{1}{\beta} \).
		\item[(iii)] If $\alpha > 1$ and $r_k > 0$ for all $k \in \N$, then 
		$$r_k \leq O\left(k^{\frac{1}{1-\alpha}}\right), \text{ as } k \to \infty;$$
		i.e., the sequence $\{r_k\}$ converges sublinearly to $0$.
	\end{enumerate}
\end{lemma}

\begin{proof}
	(i) If $\alpha=0$, then \eqref{eq:seqrel} implies that for large enough $k$ (i.e., there exists $N>0$, $\forall k\geq N$), we have 
	$$0\leq r_{k+1}\leq r_k - \frac{1}{\beta}.$$
	It follows by $r_k\to 0$ and $\frac{1}{\beta}>0$ that $\{r_k\}$ converges to $0$ in a finite number of steps, and we can estimate the number of steps as:
	$$0\leq r_{k+1} \leq r_{k}-\frac{1}{\beta} \leq r_{k-1}-\frac{2}{\beta}\leq \cdots \leq r_{N}-\frac{k-N+1}{\beta}.$$
	Hence $$k\leq \beta r_N + N - 1.$$
	(ii) If $\alpha \in ]0,1]$ and $r_k>0, \forall k\in \N$, then we get from $r_k\to 0$ that $r_k<1$ for large enough $k$. Thus, it follows by \eqref{eq:seqrel} that 
	$$r_k \leq r_k^{\alpha} \leq \beta (r_k-r_{k+1})$$
	for large enough $k$ ($k\geq N$). Hence
	$$r_{k+1}\leq (1-\frac{1}{\beta}) r_k, \text{ with } \beta>1,$$
	so that $$r_k\leq (1-\frac{1}{\beta})^{k-N} r_N = O((1-\frac{1}{\beta})^{k}), \text{ as } k\to \infty,$$
	i.e., $\{r_k\}$ converges linearly to $0$ with rate $1-\frac{1}{\beta}$ for large enough $k$.\\
	(iii) If $\alpha>1$ and $r_k>0$ for all $k\in \N$. By the decreasing of $\phi(t)=t^{-\alpha}$ and $r_k\geq r_{k+1}$, we have 
	$$\phi(r_k)(r_k-r_{k+1}) \leq \int_{r_{k+1}}^{r_k} \phi(t)~\mathrm{d} t.$$
	Then
	$$\frac{1}{\beta} \overset{\eqref{eq:seqrel}}{\leq}\phi(r_k)(r_k-r_{k+1}) \leq \int_{r_{k+1}}^{r_k} \phi(t)~\mathrm{d} t = \frac{r_{k+1}^{1-\alpha} - r_{k}^{1-\alpha}}{\alpha-1}.$$
	Hence,
	$$r_{k+1}^{1-\alpha} - r_{k}^{1-\alpha} \geq \frac{\alpha-1}{\beta}, \forall k\geq N.$$
	Summing up for $k$ from $N$ to $M-1 (\geq N)$, we have 
	$$r_{M}^{1-\alpha} - r_{N}^{1-\alpha} \geq \frac{\alpha-1}{\beta} (M-N).$$
	Then,
	$$r_{M} \leq \left( r_{N}^{1-\alpha} + \frac{\alpha-1}{\beta} (M-N) \right)^{\frac{1}{1-\alpha}} = O(M ^{\frac{1}{1-\alpha}}).$$
	Hence, 
	$$r_{k}\leq O(k^{\frac{1}{1-\alpha}}), \text{ as } k\to \infty.$$
	That is, there exists some $\eta>0$ such that 
	$$r_{k}\leq \eta k^{\frac{1}{1-\alpha}}$$
	for large enough $k$, which completes the proof. 
\end{proof}

\begin{lemma}\label{lem:cvrate-tri}
	Let $\{r_k\}$ be a nonincreasing and nonnegative sequence converging to $0$. Suppose there exist three constants $a > 0$, $b > 0$, and $c > 0$ such that for sufficiently large $k$, we have 
	\begin{equation}
		\label{eq:seqrel-tri}
		r_{k} \leq c (r_{k-1} - r_{k}) + a (r_k - r_{k+1})^b.
	\end{equation}
	Then:
	\begin{enumerate}
		\item[(i)] If \( b \geq 1 \), then \( \exists q \in (0,1) \) such that 
		$$r_k \leq O(q^k) \text{ as } k \to \infty;$$
		i.e., the sequence $\{r_k\}$ converges linearly. 
		\item[(ii)] If \( b \in (0,1) \), then     
		$$r_k \leq O\left( k^{\frac{b}{b-1}} \right) \text{ as } k \to \infty;$$
		i.e., the sequence $\{r_k\}$ converges sublinearly.
	\end{enumerate}
\end{lemma}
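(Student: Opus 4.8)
The plan is to work with the nonnegative consecutive differences $\Delta_k := r_k - r_{k+1}$. Since $\{r_k\}$ is nonincreasing, nonnegative and converges to $0$, the series $\sum_k \Delta_k$ telescopes to a finite sum, hence $\Delta_k\to 0$; in particular $\Delta_k\le 1$ for all large $k$. I would also dispose of the trivial case first: if $r_k=0$ for some $k$, then $r_j=0$ for every $j\ge k$ by monotonicity, and both asserted rates hold vacuously, so I may assume $r_k>0$ for all $k$ throughout.

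For part (i), where $b\ge 1$, the inequality $\Delta_k^b\le \Delta_k$ (valid once $\Delta_k\le 1$) linearizes the hypothesis to $r_k\le c(r_{k-1}-r_k)+a(r_k-r_{k+1})$ for all large $k$. Summing this from $k=K$ to $\infty$ telescopes the right-hand side and gives $\sum_{k\ge K}r_k\le c\,r_{K-1}+a\,r_K\le (a+c)\,r_{K-1}$ (the partial sums being bounded above and monotone, the series converges). Setting $S_K:=\sum_{k\ge K}r_k$ and substituting $r_{K-1}=S_{K-1}-S_K$ turns this into the contraction $S_K\le \frac{a+c}{1+a+c}\,S_{K-1}$; iterating yields $S_K=O(q^K)$ with $q:=\frac{a+c}{1+a+c}\in(0,1)$, and since $r_K\le S_K$ this gives the claimed linear rate.

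For part (ii), where $b\in(0,1)$, set $p:=1/b>1$. Raising the hypothesis to the power $p$, applying the elementary inequality $(u+v)^p\le 2^{p-1}(u^p+v^p)$, using $bp=1$ and $\Delta_{k-1}^p\le\Delta_{k-1}$ (valid once $\Delta_{k-1}\le1$, since $p>1$), I would obtain $r_k^p\le C'(\Delta_{k-1}+\Delta_k)=C'(r_{k-1}-r_{k+1})$ for all large $k$, with $C':=2^{p-1}\max\{c^p,a^p\}$; in particular $r_{k+1}^p\le r_k^p\le C'(r_{k-1}-r_{k+1})$. The key observation is that for even $k$ the indices $k-1$ and $k+1$ are consecutive odd integers, so the subsequence $t_m:=r_{2m-1}$ obeys $t_{m+1}^p\le C'(t_m-t_{m+1})$ for all large $m$. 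As $\{t_m\}$ is nonincreasing, positive and converges to $0$, Lemma \ref{lem:cvrate-bis}(iii) (with $\alpha=p>1$, $\beta=C'$) yields $t_m=O\big(m^{1/(1-p)}\big)$. Picking, for each large $k$, an index $m$ with $2m-1\le k$ and $m$ comparable to $k$, monotonicity gives $r_k\le t_m=O\big(k^{1/(1-p)}\big)$, and since $1/(1-p)=b/(b-1)$ this is the asserted sublinear rate.

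The main obstacle is the extra term $c(r_{k-1}-r_k)$, which involves three consecutive iterates and prevents a direct appeal to Lemma \ref{lem:cvrate} or Lemma \ref{lem:cvrate-bis}. In both parts the remedy is to absorb this term — by summation in (i), by a power-mean inequality in (ii) — so as to reduce to a two-term recursion of a type already handled; the price paid in (ii) is the two-step difference $r_{k-1}-r_{k+1}$, circumvented by passing to the odd-indexed subsequence. The remaining work is routine bookkeeping: checking $bp=1$ and $1/(1-p)=b/(b-1)$, verifying $\Delta_k\le1$ eventually, and confirming that the constants $q$ and $C'$ fall in the ranges required by the earlier lemmas.
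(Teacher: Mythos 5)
Your proof is correct, but it takes a genuinely different route from the paper's. The paper, in both parts, first replaces $(r_k-r_{k+1})^b$ by $r_k-r_{k+1}$ (when $b\ge 1$) or raises to the power $1/b$ (when $b<1$), and then argues by a two-case comparison of the asymptotic sizes of $r_{k-1}-r_k$ and $r_k-r_{k+1}$, so that one of the two terms on the right-hand side absorbs the other; this reduces \eqref{eq:seqrel-tri} to a one-step recursion of the form $r_k^{\alpha}\le \beta(r_k-r_{k+1})$ or $r_k^{\alpha}\le\beta(r_{k-1}-r_k)$, to which Lemma \ref{lem:cvrate} (iii) and Lemma \ref{lem:cvrate-bis} (iii) apply (and, for $b\ge1$, a direct contraction gives the linear rate). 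You avoid this dichotomy altogether: in (i) you sum the linearized inequality over $k\ge K$, telescope both difference terms, and turn the tail sums $S_K=\sum_{k\ge K}r_k$ into an explicit contraction $S_K\le\frac{a+c}{1+a+c}S_{K-1}$, which yields the linear rate with a concrete ratio $q=\frac{a+c}{1+a+c}$; in (ii) you combine the two terms via the power-mean inequality $(u+v)^p\le 2^{p-1}(u^p+v^p)$ into the two-step difference $r_{k-1}-r_{k+1}$, pass to the odd-indexed subsequence $t_m=r_{2m-1}$ so that Lemma \ref{lem:cvrate-bis} (iii) applies verbatim, and recover the full-sequence rate by monotonicity (harmless, since a polynomial rate only loses a constant factor under index halving). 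What each approach buys: the paper's argument is shorter and reuses its earlier lemmas with minimal manipulation, but its case split ("$r_{k-1}-r_k\sim O(r_k-r_{k+1})$ or vice versa") is not an exhaustive dichotomy for an arbitrary sequence, so your tail-summation and power-mean arguments are actually tighter on this point and produce explicit constants, at the cost of the extra subsequence bookkeeping in (ii).
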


\begin{proof}
	The basic idea is to reduce the two terms on the right hand side to one term by asymptotic behavior of the residuals.\\
	(i) If $b\geq 1$, as $r_k-r_{k+1}\to 0$, then for large enough $k$, we have 
	$$(r_k - r_{k+1})^b \leq r_k - r_{k+1}.$$
	Then \eqref{eq:seqrel-tri} is reduced to 
	$$r_{k} \leq c (r_{k-1}-r_{k}) + a (r_k - r_{k+1}) \text{ as } k \to \infty.$$
	Now, consider two cases: \\
	$\rhd$ If $r_{k-1}-r_{k}\sim_{k\to \infty} O(r_{k}-r_{k+1})$ (clearly including the case $r_{k-1}-r_{k}\sim_{k\to \infty} r_{k}-r_{k+1}$), then 
	$$r_k \leq  O(r_k - r_{k+1}) + a (r_k - r_{k+1}) = O(r_k - r_{k+1}) \text{ as } k\to \infty.$$
	Hence, there exists $\beta$ large enough (let $\beta>1$) such that
	$$r_k \leq \beta (r_k - r_{k+1}) \text{ as } k \to \infty.$$
	Then, we get :
	$$r_{k+1} \leq \frac{\beta-1}{\beta} r_k \text{ as } k\to \infty.$$
	Hence, $\exists q := \frac{\beta-1}{\beta}\in (0,1)$ such that
	$$\boxed{r_k\leq O(q^k) \text{ as } k\to \infty.}$$
	$\rhd$ If $r_{k}-r_{k+1}\sim_{k\to \infty} O(r_{k-1}-r_{k})$, then we get in a similar way that 
	$\exists \beta > 0$ such that
	$$r_k \leq \beta (r_{k-1} - r_{k}) \text{ as } k\to \infty.$$
	Hence, there exists $q:=\frac{\beta}{1+\beta}\in (0,1)$ such that
	$$\boxed{r_k\leq O(q^k) \text{ as }k\to \infty.}$$
	In both cases, we have the linear convergence of the sequence $\{r_k\}$.\\
	(ii) If $b\in (0,1)$, we can use a similar technique to simplify the right hand side. \\
	$\rhd$ If $r_{k-1}-r_{k}\sim_{k\to \infty} O(r_{k}-r_{k+1})$, then 
	$$r_k \leq a(r_k-r_{k+1})^b + O((r_{k}-r_{k+1})^b) \text{ as } k\to \infty.$$
	Hence, there exists $C>0$ such that 
	$$r_k \leq C(r_k-r_{k+1})^b,$$
	that is 
	$$r_k^{\frac{1}{b}} \leq C^{\frac{1}{b}}(r_k-r_{k+1}).$$
	Thus, $\exists \alpha := \frac{1}{b}>1, \beta:=C^{\frac{1}{b}} > 0$ such that
	$$\boxed{r_k^{\alpha} \leq \beta (r_k - r_{k+1}) \text{ as } k\to \infty.}$$
	$\rhd$ If $r_{k}-r_{k+1}\sim_{k\to \infty} O(r_{k-1}-r_{k})$, then 
	$$r_k \leq O((r_{k-1}-r_{k})^b) \text{ as } k\to \infty.$$
	Hence, $\exists \alpha := \frac{1}{b}>1, \beta > 0$ such that
	$$\boxed{r_k^{\alpha} \leq \beta (r_{k-1} - r_{k}) \text{ as } k\to \infty.}$$
	In both cases, we apply Lemmas \ref{lem:cvrate} (iii) and \ref{lem:cvrate-bis} (iii) respectively to prove that $$\boxed{r_k \leq O(k^{\frac{1}{1-\alpha}}) = O\left( k^{\frac{b}{b-1}} \right) \text{ as } k\to \infty.}$$
	Hence, the sequence $\{r_k\}$ converges sublinearly.
\end{proof}

\section*{Acknowledgements}

The author thanks the editor and the anonymous reviewers for their careful reading and constructive suggestions, which helped improve the presentation of this paper. This work was supported by the National Natural Science Foundation of China (Grant Nos.\ 42450242 and 11601327), the Beijing Overseas High-Level Talent Program, and institutional support from the Beijing Institute of Mathematical Sciences and Applications (BIMSA).

\bibliography{references}

\end{document}